\documentclass[11pt, oneside]{amsart}
\usepackage[text={5.58in,8.5in},centering,letterpaper,dvips]{geometry}
\usepackage[dvipsnames]{xcolor}
\usepackage{graphicx}
\usepackage{subcaption}
\usepackage{amsfonts}
\usepackage{epsf}
\usepackage{amssymb}
\usepackage{amsmath}
\usepackage{amscd}
\usepackage{tikz}
\usepackage{pdfpages}
\usepackage{fancyhdr}
\usepackage{setspace}
\usepackage{hyperref}
\usepackage[all]{xy}
\usetikzlibrary{matrix}
\usepackage{verbatim}
\usepackage{enumerate}

%\usepackage[inline]{showlabels}
%32760

\theoremstyle{theorem}
\newtheorem{theorem}{Theorem}[section]

\newtheorem{lemma}[theorem]{Lemma}
\newtheorem{question}[theorem]{Question}

\theoremstyle{definition}

\newtheorem{remark}[theorem]{Remark}

\newcommand{\U}{\mathcal{U}}
\newcommand{\Pp}{\mathcal{P}}

\newcommand{\Tt}{\mathcal T}

\newcommand{\TT}{\mathbb{T}}
\newcommand{\Ss}{\mathbb{S}}
\newcommand{\PP}{\mathbb{P}}

\newcommand{\pd}{\partial}

\newcommand{\be}{\begin{enumerate}}
\newcommand{\ee}{\end{enumerate}}

\newcommand{\K}{\mathcal{K}}

\newcommand{\T}{\mathcal T}

\newcommand{\nat}{\natural}
\newcommand{\sgn}{\text{sign}}

%%% Bold Subsection Numbering %%%%%%%%

\makeatletter
\def\@seccntformat#1{%
  \protect\textup{\protect\@secnumfont
    \ifnum\pdfstrcmp{subsection}{#1}=0 \bfseries\fi% subsection # in \bfseries
    \csname the#1\endcsname
    \protect\@secnumpunct
  }%
}  
\makeatother

%%% Repeated Theorem Environment %%%%%%%%

\makeatletter
\newtheorem*{rep@theorem}{\rep@title}
\newcommand{\newreptheorem}[2]{%
\newenvironment{rep#1}[1]{%
 \def\rep@title{#2 \ref{##1}}%
 \begin{rep@theorem}}%
 {\end{rep@theorem}}}
\makeatother

\newreptheorem{theorem}{Theorem}
\newreptheorem{lemma}{Lemma}
\newreptheorem{question}{Question}
\newreptheorem{corollary}{Corollary}
\newreptheorem{proposition}{Proposition}

%%% Repeated Theorem Environment %%%%%%%%

\topmargin = -.25in 
\textwidth = 6in
\textheight = 8.75in
\oddsidemargin = .25in
\evensidemargin = 0in
\begin{document}

\rhead{\thepage}
\lhead{\author}
\thispagestyle{empty}

%\tableofcontents
%\listoffigures

\raggedbottom
\pagenumbering{arabic}
\setcounter{section}{0}

%%%%%%%%%%%%%%%%%%%%%%%%%%%%%%%%%%%%%%%%%%%%%%%%%%%%%%%%
%%%%%%%%%%%%%%%%%%%%%%%%%%%%%%%%%%%%%%%%%%%%%%%%%%%%%%%%
%%%%%%%%%%%%%%%%%%%%%%%%%%%%%%%%%%%%%%%%%%%%%%%%%%%%%%%%

\title{Tri-plane diagrams for simple surfaces in $S^4$}
%Date{\today}

\author{Wolfgang Allred}
\address{University of Nebraska-Lincoln, Lincoln, NE 68588}
\email{wallred2@huskers.unl.edu}
\urladdr{}

\author{Manuel Arag\'on}
\address{Universidad de los Andes, Bogot\'a, Cundinamarca, Colombia}
\email{m.aragon@uniandes.edu.co}
\urladdr{}

\author{Zack Dooley}
\address{Reed College, Portland, OR 97202}
\email{dooleyz@reed.edu}
\urladdr{}

\author{Alexander Goldman}
\address{Skidmore College, Saratoga Springs, NY 12866}
\email{agoldma1@skidmore.edu}
\urladdr{}

\author{Yucong Lei}
\address{University of Michigan, Ann Arbor, MI 48109}
\email{leiyc@umich.edu}
\urladdr{}

\author{Isaiah Martinez}
\address{California State University, Fresno, Fresno, CA 93740}
\email{imartinez00@mail.fresnostate.edu}
\urladdr{}

\author{Nicholas Meyer}
\address{University of Nebraska-Lincoln, Lincoln, NE 68588}
\email{nicholas.meyer2@huskers.unl.edu}
\urladdr{https://nickmeyer.me/}

\author{Devon Peters}
\address{Oakland University, Rochester, MI 48309}
\email{devglemic@icloud.com}
\urladdr{}

\author{Scott Warrander}
\address{University of Edinburgh, Edinburgh EH8 Y9L, UK}
\email{swarrand@ed.ac.uk}
\urladdr{https://sites.google.com/view/scottwarrander/}

\author{Ana Wright}
\address{University of Nebraska-Lincoln, Lincoln, NE 68588}
\email{awright@huskers.unl.edu}
\urladdr{https://www.math.unl.edu/~awright14/}

\author{Alexander Zupan}
\address{University of Nebraska-Lincoln, Lincoln, NE 68588}
\email{zupan@unl.edu}
\urladdr{http://www.math.unl.edu/azupan2}

\begin{abstract}
Meier and Zupan proved that an orientable surface $\K$ in $S^4$ admits a tri-plane diagram with zero crossings if and only if $\K$ is unknotted, so that the crossing number of $\K$ is zero.  We determine the minimal crossing numbers of nonorientable unknotted surfaces in $S^4$, proving that $c(\Pp^{n,m}) = \max\{1,|n-m|\}$, where $\Pp^{n,m}$ denotes the connected sum of $n$ unknotted projective planes with normal Euler number $+2$ and $m$ unknotted projective planes with normal Euler number $-2$.  In addition, we convert Yoshikawa's table of knotted surface ch-diagrams to tri-plane diagrams, finding the minimal bridge number for each surface in the table and providing upper bounds for the crossing numbers.
\end{abstract}

\maketitle

\section{Introduction}

Tri-plane diagrams were introduced by Meier and Zupan in~\cite{MZB1} as an adaptation of the theory of trisections~\cite{GK} to the setting of knotted surfaces in $S^4$.  A \emph{tri-plane diagram} $D$ is a triple $(D_1,D_2,D_3)$ such that each $D_i$ is a planar diagram for a trivial tangle and $D_i \cup \overline{D}_j$ is an unlink diagram.  Meier and Zupan showed that any tri-plane diagram $D$ can be used to construct a surface $\K \subset S^4$, and conversely, every surface $\K \subset S^4$ can be represented in this way.

Tri-plane diagrams can be viewed as one (of many) natural ways to transfer ideas from classical knot theory to dimension four; for instance, crossing number has a natural analogue in this setting:  Define the \emph{crossing number} $c(D)$ of a tri-plane diagram to be
\[ c(D) = c(D_1) + c(D_2) + c(D_3),\]
and then define the \emph{crossing number} $c(\K)$ of $\K \subset S^4$ to be
\[ c(\K) = \min\{ c(D) : D \text{ is a tri-plane diagram representing } \K\}.\]
Proposition 4.4 of~\cite{MZB1} asserts that for an orientable surface $\K \subset S^4$, we have $c(\K) = 0$ if and only if $\K$ is unknotted.  In this paper, we determine the crossing numbers for unknotted nonorientable surfaces.  Let $\Pp^{\pm}$ denote the unknotted projective plane with normal Euler number $e(\Pp^{\pm}) = \pm 2$, and let $\Pp^{n,m}$ denote the connected sum of $n$ copies of $\Pp^+$ and $m$ copies of $\Pp^-$.  We prove
\begin{theorem}\label{main}
For any unknotted nonorientable surface $\Pp^{n,m}$,
\[ c(\Pp^{n,m}) = \max\{1,|n-m|\}.\]
\end{theorem}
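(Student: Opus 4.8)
The plan is to prove $c(\Pp^{n,m}) \ge \max\{1,|n-m|\}$ and the reverse inequality separately. By replacing a tri-plane diagram with its mirror (which swaps $\Pp^+ \leftrightarrow \Pp^-$, hence $\Pp^{n,m} \leftrightarrow \Pp^{m,n}$, without changing the crossing count) I may assume $n \ge m$. The input to the lower bound is the normal Euler number $e(\Pp^{n,m}) = 2(n-m)$, which follows from additivity of $e$ under connected sum together with $e(\Pp^{\pm}) = \pm 2$, combined with the key estimate \[ |e(\K)| \le 2\,c(D) \qquad\text{for every tri-plane diagram } D \text{ of } \K \subset S^4. \] I would prove this by realizing $e(\K)$ as the algebraic intersection number of $\K$ with a normal pushoff $\K'$ built directly from $D$: over the crossing-free part of each tangle $D_i$ the surface is locally standard and $\K, \K'$ can be made disjoint there, while a neighborhood of a crossing is modeled on a single crosscap-type feature contributing at most $2$ in absolute value to $\K \cdot \K'$. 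Summing over the $c(D_1) + c(D_2) + c(D_3) = c(D)$ crossings gives the bound, whence $c(D) \ge |e(\Pp^{n,m})|/2 = n - m$ for every diagram, so $c(\Pp^{n,m}) \ge n - m$. Since this is vacuous when $n = m$, I would also note that a tri-plane diagram with $c(D) = 0$ has each $D_i$ a crossingless trivial tangle and therefore represents an orientable surface (cf.\ the proof of Proposition~4.4 of~\cite{MZB1}); as $\Pp^{n,m}$ is non-orientable, $c(\Pp^{n,m}) \ge 1$. Combining, $c(\Pp^{n,m}) \ge \max\{1, n-m\}$.

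For the upper bound I would use that crossing number is subadditive under connected sum, $c(\K_1 \# \K_2) \le c(\K_1) + c(\K_2)$ (amalgamate tri-plane diagrams); that an unknotted torus $T$ admits a zero-crossing diagram, so $c(\K \# T) \le c(\K)$; and that $\Pp^{n+1,m+1} \cong \Pp^{n,m} \# T$ whenever $(n,m) \ne (0,0)$ — the last being the embedded form of Dyck's relation, or a consequence of the classification of unknotted non-orientable surfaces in $S^4$ by non-orientable genus and normal Euler number. These reduce the problem to two base cases: a tri-plane diagram for $\Pp^{k,0}$ ($k \ge 1$) with exactly $k$ crossings, which I would build by iterating $k$ times a ``crosscap'' move that raises non-orientable genus by $1$, changes $e$ by $+2$, and inserts one crossing; and an explicit $1$-crossing diagram for the Klein bottle $\Pp^{1,1}$. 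Since $\Pp^{n,m} \cong \Pp^{n-m,0} \# (m \text{ tori})$ when $n > m$, and $\Pp^{m,m} \cong \Pp^{1,1} \# ((m-1)\text{ tori})$, this yields $c(\Pp^{n,m}) \le \max\{1, n-m\}$, matching the lower bound.

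The main obstacle is the estimate $|e(\K)| \le 2\,c(D)$: one must extract a workable combinatorial recipe for $e(\K)$ from a tri-plane diagram and verify that crossing-free regions contribute $0$ while each crossing contributes at most $2$ (non-local contributions, if any, would have to be controlled). An alternative is to convert $D$ to a banded unlink or ch-diagram, for which $e$ is computed by a known formula, and bound the relevant count by $c(D)$ — but controlling that conversion is itself delicate. The remaining steps — confirming that the base-case diagrams satisfy the tri-plane conditions (each $D_i$ a trivial tangle, each $D_i \cup \overline{D}_j$ an unlink) and represent the claimed surfaces, and handling the small cases $n - m \in \{0,1\}$ where the constraint $c \ge 1$ is active — are comparatively routine.
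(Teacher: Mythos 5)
Your proposal is correct in outline and its lower bound matches the paper's exactly, but both halves take somewhat different routes, and the difference matters most at the step you yourself flag as the main obstacle. For the inequality $|e(\K)| \le 2c(D)$, the ``workable combinatorial recipe'' you are looking for already exists: by \cite{JMMZ}, $e(\K) = w(D_1 \cup \overline{D}_2) + w(D_2 \cup \overline{D}_3) + w(D_3 \cup \overline{D}_1)$ for any tri-plane diagram $D$. Since the two tangles in $D_i \cup \overline{D}_j$ do not cross one another, every crossing of $D$ appears in exactly two of these three unlink diagrams and contributes $\pm 1$ to each writhe, so the triangle inequality gives $|e(D)| \le 2c(D)$ at once (Lemma~\ref{euler}); there is no need for the intersection-theoretic pushoff argument, whose non-local contributions you would otherwise have to control. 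The $c \ge 1$ step is Lemma~\ref{nonor}, proved just as you suggest. For the upper bound you take the route the paper only mentions in a remark: stabilize by unknotted tori using $\Pp^{a,b} \# \T = \Pp^{a+1,b+1}$ (valid for $(a,b) \neq (0,0)$) together with $c(\K \# \T) \le c(\K)$, reducing to the base cases $\Pp^{k,0} = \#^k \Pp^+$ (giving $k$ crossings) and $\Pp^{1,1}$. The paper instead builds $1$-crossing diagrams $P^{n,n}$ and $P^{n+1,n}$ directly by induction on tri-plane moves (Lemmas~\ref{equal} and~\ref{plusone}), which has the advantage of producing diagrams not manifestly decomposable with torus summands. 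Either way, the one genuinely non-routine construction you still owe is the $1$-crossing diagram for the Klein bottle $\Pp^{1,1}$: the obvious diagram $P^+ \# P^-$ has two crossings, and getting down to one requires a specific sequence of mutual braid transpositions and interior Reidemeister moves (Figure~\ref{fig:11}). That construction is the entire content of the case $n = m$, so it should not be dismissed as routine, though it is certainly doable.
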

The proof involves three key steps:  First, we show that if $\K \subset S^4$ is nonorientable, then $c(\K) > 0$.  Second, we prove that for any surface $\K \subset S^4$, we have $c(\K) \geq \frac{e(\K)}{2}$.  Finally, we realize diagrams for $\Pp^{n,m}$ with the claimed minimal crossing numbers.  Taken together, these three steps yield the main theorem.

Another type of diagram for knotted surfaces is called a \emph{ch-diagram}.  In~\cite{yoshi}, Yoshikawa published a table of ch-diagrams for the 23 simplest surfaces in 4-space, organized by a measure of complexity called their \emph{ch-index}.  We convert each of Yoshikawa's diagrams to a tri-plane diagram and record our findings in Table~\ref{table1}.  In addition to finding upper bounds for the crossing numbers of the surfaces in the table, we prove

\begin{theorem}\label{main2}
Each of the bridge numbers given in Table~\ref{table1} is minimal.
\end{theorem}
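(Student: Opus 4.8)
The plan is to establish a lower bound on bridge number that matches the upper bounds recorded in Table~\ref{table1}, surface by surface. Recall that the bridge number $b(\K)$ of a knotted surface is the minimal value of $\frac{1}{2}(|\aaa_1| + |\aaa_2| + |\aaa_3|)$, where $|\aaa_i|$ is the number of strands in the trivial tangle $D_i$; a tri-plane diagram achieving $b(\K)$ is called a $(b;c_1,c_2,c_3)$-diagram. The key classical input is that low bridge number forces the surface to be simple: a genuine $2$-bridge surface is necessarily unknotted (this is the surface analogue of the classification of $2$-bridge links, and follows from the fact that a trivial tangle on two strands has an essentially unique diagram up to the relevant moves), and more generally the component count, the Euler characteristic, and orientability of $\K$ constrain $b(\K)$ from below. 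So the first step is to record, for each entry in Yoshikawa's table, the invariants of the underlying surface — number of components, total genus or nonorientable genus, and for nonorientable surfaces the normal Euler number — all of which are readable from the ch-diagram.

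The second step is to convert these invariants into bridge-number lower bounds. For the orientable surfaces in the table, I would use the fact that an orientable surface with $b(\K) \leq 3$ must be unknotted (again the surface version of the torus-knot/$2$-bridge phenomenon, together with the normal form analysis of $3$-strand trivial tangles), so any knotted orientable entry has $b \geq 4$, matching the table. For the nonorientable entries, Theorem~\ref{main} and the intermediate inequality $c(\K) \geq e(\K)/2$ used in its proof give leverage: since each $D_i$ with $c(D_i)$ crossings on $|\aaa_i|$ strands satisfies an inequality of the form $c(D_i) \leq \binom{|\aaa_i|}{2}$-type bound relating crossings to strands, and since a knotted (as opposed to unknotted) nonorientable surface of given genus cannot be realized below a certain bridge number, one extracts the needed bound. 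The cleanest uniform statement to prove is: \emph{if $\K$ admits a $b$-bridge tri-plane diagram, then the nonorientable genus of $\K$ is at most $2b - 4$ unless $\K$ is one of the explicitly unknotted surfaces}, which I would prove by analyzing the perturbation/destabilization structure of the tangles $D_i$.

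The third step is bookkeeping: for the handful of entries where the invariant-based bound is not already sharp, I would rule out the next-smaller bridge number by hand. For a candidate $(b;\ast)$-diagram with $b$ one less than the table value, the trivial tangles $D_i$ have few strands, so the space of possible diagrams (up to the tri-plane moves of~\cite{MZB1}) is finite and small; one checks directly that no such triple produces the surface in question, typically by computing a cheap invariant — the fundamental group of the complement, or the Alexander/Fox module, or simply the number of components and Euler characteristic, which already suffices in most rows. This case analysis is where I expect the real work to be, and it is also the step most prone to error, since it requires either an exhaustive enumeration argument or a clever invariant for each remaining surface; the orientable Yoshikawa entries (the ones that are non-trivial $2$-knots or $2$-component surfaces) and the nonorientable entries with small $|n-m|$ are the likely trouble spots. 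The remaining rows should follow immediately once the general lower-bound lemma of step two is in place, so the proof reduces to: (i) the lemma, and (ii) a finite, diagram-by-diagram verification for the exceptional entries.
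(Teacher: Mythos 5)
Your overall skeleton (invariant-based lower bounds plus a special argument for a few exceptional entries) matches the paper's strategy, but two of your three steps have genuine problems. In step two, the ``uniform lemma'' you propose (nonorientable genus at most $2b-4$ unless unknotted) is neither proved nor what is actually needed. The paper's lower bounds come from two concrete facts you never quite state: first, a bridge trisection of a disconnected surface restricts to one of each component, so $b(\K)$ is at least the sum of the components' minimal bridge numbers; second, the formula $\chi(\K)=c_1+c_2+c_3-b$ together with $c_i\geq 1$ forces $b\geq 2$ for $\PP^2$, $b\geq 3$ for $\TT^2$ and for $\PP^2\#\PP^2$, and so on. These two facts already give sharp bounds for every row except the five knotted $2$-spheres and $10^1_1$; for the knotted $2$-spheres one needs the Meier--Zupan classification of low-complexity bridge trisections to get $b\geq 4$, which is a cited theorem, not the ``essentially unique $2$-strand tangle'' heuristic you gesture at.

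The more serious gap is in step three. You propose to rule out smaller bridge numbers by enumerating all tri-plane diagrams with $b-1$ strands, asserting that this space is ``finite and small'' up to tri-plane moves. That is not true as stated: even for fixed $b$ there are infinitely many tri-plane diagrams (crossings are unbounded), and the finiteness of the set of \emph{surfaces} admitting small bridge trisections is itself a nontrivial classification theorem, not something you can check by hand. The paper avoids this entirely: the only entry needing a bespoke argument is $10^1_1$, where $b\leq 5$ and $\chi=0$ would force some $c_i=1$ in a $(b;c_1,c_2,c_3)$-bridge trisection, which by Theorem 4.1 of~\cite{JMMZ} forces $\pi_1(S^4\setminus\K)$ to admit a one-generator (hence cyclic) presentation --- contradicting the fact that this group is the trefoil group. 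Your instinct to use the fundamental group is correct, but without the link from ``some $c_i=1$'' to ``one-generator presentation'' the argument does not close.
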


\subsection{Layout of the paper}
In Section~\ref{sec:prelim}, we provide the relevant background material, which we use to prove Theorem~\ref{main} in Section~\ref{sec:main1}.  Section~\ref{sec:convert} describes a method for converting ch-diagrams to tri-plane diagrams; in Section~\ref{sec:library}, we use this method to produce diagrams and data for the surfaces in Yoshikawa's table, proving Theorem~\ref{main2}.  Finally, Section~\ref{sec:questions} includes an assortment of questions to motivate future research.

\subsection{Acknowledgements}
This project was completed primarily during the summer of 2021 as part of the Polymath Jr. Virtual REU, and the authors are grateful to the Polymath Jr. organizers for providing the opportunity to carry out this research.  In addition, AZ thanks Jeffrey Meier and Maggie Miller for interesting conversation and appreciates the hospitality of the Max Planck Institute for Mathematics in Bonn, Germany, which accommodated him as a guest researcher during part of the completion of this work.  NM, AW, and AZ were supported by NSF grant DMS-2005518.

\section{Preliminaries}\label{sec:prelim}

\subsection{Tri-plane diagrams}

We work in the smooth category.  A \emph{surface} $\K$ in $S^4$ is a smoothly embedded closed 2-manifold, possibly nonorientable and possibly disconnected.  As noted above, a \emph{tri-plane diagram} $D$ is a triple $(D_1,D_2,D_3)$ such that each $D_i$ is a diagram for trivial $b$-stranded tangle, i.e. a diagram representing $b$ arcs such that each arc contains a single maximum point with respect to a natural height function on $D_i$, and such that for $i \neq j$, the classical link diagram $D_i \cup \overline{D}_j$ represented an unlink.  See Figure~\ref{fig:unknot} for examples.

\begin{figure}[h!]
\begin{subfigure}{.7\textwidth}
  \centering
  \includegraphics[width=0.5\linewidth]{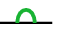}
  \label{fig:braid1}
  \caption{$U$\vspace{.4cm}}
\end{subfigure}
\begin{subfigure}{.35\textwidth}
  \centering
  \includegraphics[width=1\linewidth]{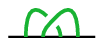}\vspace{-.4cm}
  \label{fig:braid1}
  \caption{$P^+$\vspace{.4cm}}
\end{subfigure}
\qquad \qquad \qquad
\begin{subfigure}{.35\textwidth}
  \centering
  \includegraphics[width=1\linewidth]{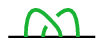}\vspace{-.4cm}
  \label{fig:braid1}
  \caption{$P^-$\vspace{.4cm}}
\end{subfigure}
\begin{subfigure}{.7\textwidth}
  \centering
  \includegraphics[width=1\linewidth]{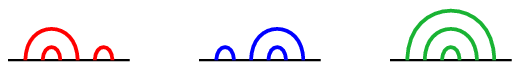}\vspace{-.4cm}
  \label{fig:braid1}
  \caption{$T$}
\end{subfigure}
	\caption{Tri-plane diagrams for the simplest unknotted surfaces}
\label{fig:unknot}
\end{figure}

Every tri-plane gives rise to a surface $\K \subset S^4$ in the following way:  Begin with the standard trisection of $S^4$, $X_1 \cup X_2 \cup X_3$, where each $X_i$ is a 4-ball and $B_i = X_i \cap X_{i-1}$ (indices taken modulo 3) is a 3-ball.  Embed the tangle represented by $D_i$ in $B_i$.  Then $\pd X_i = B_i \cup \overline{B}_{i+1}$ contains an unlink $U_i$ represented by the diagram $D_i \cup \overline{D}_{i+1}$, and we can attach disks $\mathcal D_i$ to $U_i$ in $X_i$.  The union $\mathcal D_1 \cup \mathcal D_2 \cup \mathcal D_3$ is then an embedded surface in $S^4$.  Moreover, the disks $\mathcal D_i$ are unique, and thus this process determines a unique surface, up to isotopy.  See~\cite{MZB1} for additional details.  Letting $c_i$ be the number of components in the unlink $U_i$, we will sometimes call this decomposition a $(b;c_1,c_2,c_3)$-bridge trisection.

In~\cite{MZB1}, it was proved that every surface $\K \subset S^4$ can be represented by a tri-plane diagram, and any two tri-plane diagrams $D$ and $D'$ are related by a finite sequence of moves:
\begin{enumerate}
\item Interior Reidemeister moves:  Classical Reidemeister moves performed on the interior of one of the tangle diagrams $D_i$.
\item Mutual braid transpositions:  A braid transposition $\sigma_j$ performed along the boundary of all three tangle diagrams $D_1$, $D_2$, and $D_3$.
\item Stabilization/destabilization moves:  A local move that increases or decreases the number of strands in each tangle diagram.
\end{enumerate}
An example of several mutual braid transpositions and interior Reidemeister moves being used to convert one diagram to another is shown in Figure~\ref{fig:11}.  By convention, we will depict mutual braid transpositions under the tri-plane diagram in gray, and strands that are omitted are assumed to carry the identity braid.

There are, in fact, an infinite family of stabilization/destabilization moves, but we will only need two of them for our purposes.  These two moves are depicted in Figures~\ref{fig:stab1} and~\ref{fig:stab2}, and they are also valid when performed on any permutation of the tangles or when reflected over a vertical line (as occurs in the 2-destabilization shown in Figure~\ref{fig:m}).  The curious reader is encouraged to reference~\cite{MZB1} for additional details.

\begin{figure}[h!]
  \centering
  \includegraphics[width=.40\linewidth]{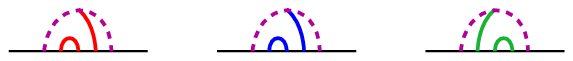}
  \raisebox{.2cm}{$\stackrel{\text{stab.}}{\longleftarrow}$   $\stackrel{\text{destab.}}{\longrightarrow}$}
  \includegraphics[width=.40\linewidth]{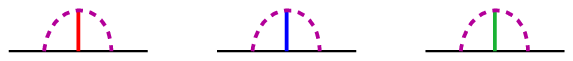}
	\caption{The 1-stabilization and 1-destabilization operations.}
\label{fig:stab1}
\end{figure}

\begin{figure}[h!]
  \centering
  \includegraphics[width=.40\linewidth]{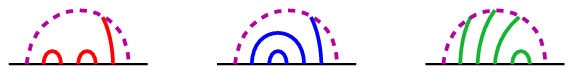}
 \raisebox{.2cm}{$\stackrel{\text{stab.}}{\longleftarrow}$   $\stackrel{\text{destab.}}{\longrightarrow}$}
  \includegraphics[width=.40\linewidth]{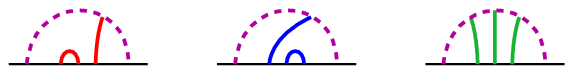}
	\caption{The 2-stabilization and 2-destabilization operations.}
\label{fig:stab2}
\end{figure}

We can also make sense of the connected sum operation on surfaces via tri-plane diagrams.  First, given a $b$-stranded trivial tangle diagram $D_i$ and a $b'$-stranded trivial tangle diagram $D_i'$, we define the \emph{boundary connected sum} of $D_i$ and $D_i'$, denoted $D_i \nat D_i'$, to be the tangle diagram constructed by attaching the rightmost endpoint of $D_i$ to the leftmost endpoint of $D_i'$.  Standard cut-and-paste arguments can be used to show that $D_i \nat D_i'$ is a $(b+b'-1)$-stranded trivial tangle diagram.  

Now, given tri-plane diagrams $D = (D_1,D_2,D_3)$ representing $\K \in S^4$ and $D' = (D_1',D_2',D_3')$ representing $\K'$ in $S^4$, define the \emph{connected sum} of $D$ and $D'$, denoted $D \# D'$, by
\[ D \# D' = (D_1 \nat D_1', D_2 \nat D_2', D_3 \nat D_3').\]
Then $D \# D'$ is a tri-plane diagram for the surface $\K \# \K'$ in $S^4$.  We note that as is the case with classical knots, the connected sum operation is commutative, associative, and does not depend on the choice of points in $\K$ and $\K'$ at which the operation takes place.  The top frame of Figure~\ref{fig:11} shows an example of the diagram $P^+ \# P^-$.  

\subsection{Invariants via tri-plane diagrams}

We can measure the complexity of a tri-plane diagram $D$ in several different ways.  The \emph{crossing number} $c(D)$ is the sum $c(D_1) + c(D_2) + c(D_3)$, while the \emph{bridge number} $b(D)$ is the number of strands in each of the tangles $D_i$.  To obtain surface invariants, we minimize over all possible diagrams:
\begin{eqnarray*}
c(\K) &=& \min\{ c(D) : D \text{ is a tri-plane diagram representing } \K\},\\
b(\K) &=& \min\{ b(D) : D \text{ is a tri-plane diagram representing } \K\}.
\end{eqnarray*}

There are other types of invariants better suited to distinguishing surfaces, and we can compute these invariants from a single diagram.  For example, the \emph{Euler characteristic} of a surface $\K$ admitting a tri-plane diagram $D$ inducing a $(b;c_1,c_2,c_3)$-bridge trisection is given by
\[ \chi(\K) = c_1+c_2+c_3 - b,\]
as shown in~\cite{MZB1}.  Another such invariant is orientability.  An \emph{orientation} of a tri-plane diagram $D$ is an assignment of a $+$ or a $-$ to each of the $2b$ endpoints of the three tangles (called \emph{bridge points}) so that the endpoints of $D_1$, $D_2$, and $D_3$ have consistent labels, and each strand in $D_i$ connects a $+$ and $-$ bridge point.  The tri-plane diagram is \emph{orientable} if it admits an orientation.  It was shown in~\cite{MTZ} that $D$ is orientable if and only if the surface $\K$ is orientable.

\begin{lemma}\label{nonor}
If $c(\K) = 0$, then $\K$ is orientable.
\end{lemma}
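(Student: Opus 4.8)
The plan is to analyze what a zero-crossing tri-plane diagram looks like and show it must admit an orientation. If $c(\K) = 0$, there is a diagram $D = (D_1, D_2, D_3)$ with $c(D_1) = c(D_2) = c(D_3) = 0$, so each $D_i$ is a crossingless diagram for a trivial $b$-stranded tangle. A crossingless trivial tangle diagram on $b$ strands is simply $b$ nested or side-by-side ``cups'' (arcs with a single maximum), i.e., it is determined by a non-crossing perfect matching (a planar chord diagram) on the $2b$ bridge points arranged along a line. So the entire combinatorial content of $D$ is a triple of non-crossing matchings $(M_1, M_2, M_3)$ on the same $2b$ points.

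First I would set up the orientation criterion concretely in this language: an orientation is a $\pm$ labeling of the $2b$ bridge points such that every arc of every $D_i$ joins a $+$ to a $-$; equivalently, the labeling must two-color each matching $M_i$. A non-crossing matching on points on a line always joins points of opposite parity (this is the standard parenthesis-matching fact: each chord spans an even number of endpoints), so the labeling that assigns $+$ to odd-position bridge points and $-$ to even-position bridge points simultaneously two-colors all three matchings $M_1, M_2, M_3$ at once. Hence $D$ is orientable, and by the cited result of~\cite{MTZ}, $\K$ is orientable.

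The one point that needs care — and which I expect to be the main obstacle — is justifying that a crossingless diagram for a \emph{trivial} tangle really is a non-crossing planar matching, and in particular that the ``height function'' structure (each arc has a single maximum) forces each chord to nest properly rather than wrap around. Concretely: a crossingless diagram in the plane with all endpoints on the bottom edge and each arc carrying exactly one local maximum cannot have an arc whose endpoints are ``interleaved'' with another arc's endpoints without introducing a crossing, so the matching is non-crossing; and a non-crossing matching of linearly ordered points pairs odd positions with even positions. I would phrase this as a short lemma or an inline observation, possibly citing the structure of trivial tangle diagrams from~\cite{MZB1}. Once that is pinned down, the parity argument gives the orientation immediately and the lemma follows.

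Alternatively, one could give a slicker argument via the Euler characteristic and bridge trisection data: a $(b; c_1, c_2, c_3)$-bridge trisection coming from a crossingless diagram has each $U_i$ a ``standard'' unlink, and one can track that the resulting surface is a connected sum of standard unknotted tori and spheres — but this seems to require more setup than the direct parity argument, so I would present the matching/parity proof as the main line and leave the trisection viewpoint as a remark if space permits.
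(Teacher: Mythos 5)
Your argument is correct and is essentially identical to the paper's proof: both assign $+$ to odd-position and $-$ to even-position bridge points and use the parity fact that a crossingless arc must enclose an even number of endpoints (else it would be forced to cross another arc), so every arc joins a $+$ to a $-$. The paper phrases this as a direct contradiction rather than via non-crossing matchings, but the content is the same.
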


\begin{proof}
Suppose $D$ is a zero-crossing tri-plane diagram for $\K$, with bridge points $x_1,\dots,x_{2b}$ labeled in order from left to right.  For each odd $i$, assign $x_i$ a $+$; for even $i$, assign $x_i$ a $-$.  Suppose that an arc $a$ in $D_j$ has two endpoints labeled $+$ or two endpoints labeled $-$.  Then $a$ encloses an odd number of bridge points, implying that some other arc $a'$ in $D_j$ must cross $a$, a contradiction.  It follows that $D$, and thus $\K$, is orientable.
\end{proof}

For the other invariant we will study here, we need a preliminary definition:  Given an oriented link diagram $D$, the \emph{writhe} of $D$ is the signed count of the crossings of $D$, with standard conventions for positive and negative crossings shown in Figure~\ref{fig:writhe}.  Now, let $D$ be a tri-plane diagram.  The \emph{normal Euler number} of $D$, denoted by $e(D)$, is given by
\[ e(D) = w(D_1 \cup \overline{D}_2) + w(D_2 \cup \overline{D}_3) + w(D_3 \cup \overline{D}_1).\]
In~\cite{JMMZ}, the authors proved that for any two diagrams $D$ and $D'$ for $\K$, we have $e(D) = e(D')$, and so $e(\K) = e(D)$ is a knotted surface invariant; in fact, this definition agrees with the classical definition of \emph{normal Euler number} (see~\cite{JMMZ} for further details regarding the normal Euler number).

\begin{figure}[h!]
\begin{subfigure}{.4\textwidth}
  \centering
  \includegraphics[width=.2\linewidth]{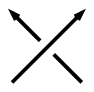}
  \label{fig:braid1}
  \caption{$+$}
\end{subfigure}
\begin{subfigure}{.4\textwidth}
  \centering
  \includegraphics[width=.2\linewidth]{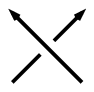}
  \label{fig:braid2}
  \caption{$-$}
\end{subfigure}
	\caption{Conventions for computing writhe}
\label{fig:writhe}
\end{figure}

 Note that for a knot diagram $D$, the writhe $w(D)$ does not depend on the choice of orientation.  In general, for a link diagram $D$, the writhe $w(D)$ \emph{does} depend on orientation choices, but only if components have nonzero linking numbers.  For tri-plane diagrams, $D_i \cup \overline{D_j}$ is an unlink diagram (pairwise linking numbers are zero), and so the computation $w(D_i \cup \overline{D}_{i+1})$ is independent of any of the chosen orientations.

Crossing number and normal Euler number can be related by the following inequality:

\begin{lemma}\label{euler}
For a knotted surface $\K$, we have $|e(\K)| \leq 2 c(\K)$.
\end{lemma}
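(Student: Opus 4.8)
The plan is to bound each of the three writhe terms in the formula $e(D) = w(D_1 \cup \overline{D}_2) + w(D_2 \cup \overline{D}_3) + w(D_3 \cup \overline{D}_1)$ in terms of the crossing numbers of the individual tangle diagrams. Fix a tri-plane diagram $D = (D_1, D_2, D_3)$ for $\K$ realizing $c(\K)$, so $c(D) = c(D_1) + c(D_2) + c(D_3) = c(\K)$. For each pair $i \neq j$, the writhe $w(D_i \cup \overline{D}_j)$ is a signed count of crossings in the link diagram $D_i \cup \overline{D}_j$. Every crossing of $D_i \cup \overline{D}_j$ is a crossing of $D_i$ or a crossing of $\overline{D}_j$ (equivalently $D_j$), since the two tangles occupy disjoint half-balls and are glued only along their boundary bridge points, where no crossings occur. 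Hence $|w(D_i \cup \overline{D}_j)| \leq c(D_i) + c(D_j)$.

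Summing over the three cyclic pairs gives
\[
|e(D)| \leq \sum_{i} \bigl|w(D_i \cup \overline{D}_{i+1})\bigr| \leq \sum_i \bigl(c(D_i) + c(D_{i+1})\bigr) = 2\bigl(c(D_1) + c(D_2) + c(D_3)\bigr) = 2c(D),
\]
where indices are taken modulo $3$. Since $e(\K) = e(D)$ by the invariance result of \cite{JMMZ} and $c(D) = c(\K)$ by our choice of diagram, we conclude $|e(\K)| \leq 2c(\K)$, as desired.

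The only point requiring care — and the step I expect to be the mildest obstacle — is verifying that the crossings of $D_i \cup \overline{D}_j$ are precisely the disjoint union of the crossings of $D_i$ and of $D_j$, with no new crossings introduced by the gluing along the bridge sphere. This follows from the structure of tri-plane diagrams: each $D_i$ is drawn in its own plane with endpoints on a common line of $2b$ bridge points, and $D_i \cup \overline{D}_j$ is formed by reflecting $D_j$ and matching bridge points, so the arcs near the bridge line are vertical and crossing-free. One should also note that the writhe is well-defined here independently of orientation choices because $D_i \cup \overline{D}_j$ is an unlink diagram (all pairwise linking numbers vanish), as remarked just before the statement; this ensures the quantities $w(D_i \cup \overline{D}_j)$ appearing in $e(D)$ are unambiguous and the bound applies to each of them.
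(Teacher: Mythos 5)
Your proposal is correct and follows essentially the same argument as the paper: both bound each writhe $w(D_i \cup \overline{D}_{i+1})$ by the number of crossings $c(D_i) + c(D_{i+1})$ (since each crossing contributes $\pm 1$ to the signed count) and observe that each tangle's crossings are counted in exactly two of the three pairwise unions, yielding the factor of $2$. No substantive differences.
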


\begin{proof}
Let $D =(D_1,D_2,D_3)$ be a tri-plane diagram for $\K$ such that $c(D) = c(\K)$.  In addition, suppose $D_i$ has $n_i$ crossings, so $c(D) = n_1 + n_2 + n_3$.  Let $Q_i$ be the set of crossings of $D_i \cup \overline{D}_{i+1}$.  Then $|Q_i| = n_i + n_{i+1}$, and choosing an orientation for $D_i \cup \overline{D}_{i+1}$, we have $w(D_i \cup \overline{D}_{i+1}) = \sum_{q \in Q_i} \sgn(q)$.  We compute
\begin{eqnarray*}
|e(\K)| = |e(D)| &=& \left| \sum_{q \in Q_i} \sgn(q) + \sum_{q \in Q_2} \sgn(q) + \sum_{q \in Q_3} \sgn(q) \right| \\
&\leq& \sum_{q \in Q_1} |\sgn(q)| + \sum_{q \in Q_2} |\sgn(q)| + \sum_{q \in Q_3} |\sgn(q)| \\
&=& |Q_1| + |Q_2| + |Q_3| \\
&=& 2 c(D) \\
&=& 2 c(\K).
\end{eqnarray*}
\end{proof}

We can also examine how crossing number, bridge number, and normal Euler number behave under connected sum.  By construction, the diagram $D \# D'$ satisfies
\[ c(D \# D') = c(D) + c(D'), \quad b(D \# D') = b(D) + b(D') - 1, \quad \text{and} \quad e(D \# D') = e(D) + e(D').\]
Since $e(\K) = e(D)$ for any diagram $D$ for $\K$, it follows that
\[ e(\K \# \K') = e(\K) + e(\K').\]
However, this does not necessarily imply that similar equalities hold for crossing number or bridge number; choosing minimal diagrams $D$ and $D'$ yields the inequalities $c(\K \# \K') \leq c(\K) + c(\K')$ and $b(\K \# \K') \leq b(\K) + b(\K') - 1$, but (as we will discuss below in Remark~\ref{degen}), there are cases in which $\K \# \K'$ has other diagrams, not realized by the above construction, that achieve lower values for both crossing number and bridge number; hence, the inequalities can be strict.

It is a fact that every tri-plane diagram $D$ is equivalent to a tri-plane diagram $D' = (D_1',D_2',D_3')$ such that every crossing in $D'$ is contained in a single tangle, say $D_3'$.  We call such a diagram \emph{concentrated}.  For instance, the diagrams for $P^{\pm}$ in Figure~\ref{fig:unknot} are concentrated diagrams.

\subsection{Unknotted surfaces}

Much of this paper concerns surfaces in $S^4$ that are unknotted, which we define here.  To begin, consider the surfaces $\U$, $\Pp^+$, $\Pp^-$, and $\Tt$ defined by the tri-plane diagrams $U$, $P^+$, $P^-$, and $T$ shown in Figure~\ref{fig:unknot}.  We call $\U$ the \emph{unknotted 2-sphere}, $\Pp^{\pm}$ the \emph{positive/negative unknotted projective plane}, and $\Tt$ the \emph{unknotted torus}.

A surface $\K \subset S^4$ is called \emph{unknotted} if $\K$ is $\U$, $\K$ is $\Tt^g$ (the connected sum of $g$ copies of $\T$), or $\K$ is $\Pp^{n,m}$ (the connected sum of $n$ copies of $\Pp^+$ and $m$ copies of $\Pp^-$).  This definition agrees with classical notions of unknottedness; see~\cite{HK79} and~\cite{MTZ} for further details about unknotted surfaces.  The surfaces $\U$ and $\T^g$ are orientable, while $\Pp^{n,m}$ is nonorientable.  In addition, using the fact that $e(\Pp^{\pm}) = \pm 2$ and the formula for the behavior of normal Euler number under connected sum, we have
\[ e(\Pp^{n,m}) = 2(n-m).\]

\section{The crossing numbers of unknotted surfaces}\label{sec:main1}

As noted in the introduction, we prove the main theorem by establishing lower and upper bounds for $c(\Pp^{n,m})$.  Lemmas~\ref{nonor} and~\ref{euler} establish the necessary lower bounds.  For the upper bounds, we prove several additional lemmas.

\begin{lemma}\label{equal}
For each $n > 0$, there exists a 1-crossing diagram $P^{n,n}$ for $\Pp^{n,n}$.
\end{lemma}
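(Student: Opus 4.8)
The plan is to construct the diagram in two stages: first settle the case $n=1$ by simplifying the naive connected-sum diagram $P^+ \# P^-$, and then reduce the general case to it by connect-summing with the zero-crossing diagram for the unknotted torus.

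For $n = 1$, I would begin with $D = P^+ \# P^-$. By the connected-sum formulas, $D$ is a tri-plane diagram for $\Pp^+ \# \Pp^- = \Pp^{1,1}$ with $b(D) = 2 + 2 - 1 = 3$ and $c(D) = c(P^+) + c(P^-) = 1 + 1 = 2$; moreover $D$ is concentrated, both of its crossings lying in the third tangle $D_3 \nat D_3'$, one inherited from $P^+$ and one from $P^-$, necessarily of opposite sign since $e(D) = e(\Pp^{1,1}) = 0$ is twice the signed crossing count of the third tangle. The goal is then to cancel one of these two crossings. Since they do not lie on a common pair of strands, I would first perform a sequence of mutual braid transpositions to reposition the strands of the third tangle — each such move also introducing an auxiliary boundary crossing in the first two tangles — and then apply interior Reidemeister moves that simultaneously remove the auxiliary crossings together with one of the two original ones. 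An explicit such sequence is recorded in Figure~\ref{fig:11}. Because every move used is one of the tri-plane moves of~\cite{MZB1}, it preserves the underlying surface, so the resulting $1$-crossing diagram, which I call $P^{1,1}$, represents $\Pp^{1,1}$.

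For general $n$, I would invoke the classical fact that attaching an unknotted handle to an unknotted nonorientable surface has the same effect as attaching two crosscaps of opposite normal Euler number; concretely, $\Pp^{1,1} \# \Pp^{\pm} \cong \Tt \# \Pp^{\pm}$ (see~\cite{HK79, MTZ}; this also follows from the classification of unknotted surfaces by Euler characteristic, normal Euler number, and orientability, since the two sides share all three invariants). Iterating this identity gives
\[ \Pp^{n,n} \;=\; \underbrace{\Pp^{1,1} \# \cdots \# \Pp^{1,1}}_{n} \;\cong\; \Pp^{1,1} \# \Tt^{\,n-1}. \]
Letting $T$ be the zero-crossing tri-plane diagram for $\Tt$ from Figure~\ref{fig:unknot}, I would then define
\[ P^{n,n} \;=\; P^{1,1} \# \underbrace{T \# \cdots \# T}_{n-1}, \]
which is a tri-plane diagram for $\Pp^{1,1} \# \Tt^{\,n-1} \cong \Pp^{n,n}$ and, by additivity of crossing number under connected sum, satisfies $c(P^{n,n}) = c(P^{1,1}) + (n-1)\,c(T) = 1 + 0 = 1$.

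The main obstacle is the base case: producing the precise sequence of mutual braid transpositions and interior Reidemeister moves that trims $P^+ \# P^-$ from two crossings to one. Because the two crossings sit on different pairs of strands, no single Reidemeister II move applies directly, and one must choose the braid transpositions carefully so that the boundary crossings they create in the first two tangles are exactly those cancelled by the subsequent Reidemeister moves. Everything downstream — the unknotted-surface identity and the additivity of crossing number under connected sum — is standard or immediate from the definitions in Section~\ref{sec:prelim}.
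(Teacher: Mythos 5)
Your proof is correct, and its base case is the same as the paper's: for $n=1$ both arguments simply defer to the explicit sequence of mutual braid transpositions and interior Reidemeister moves recorded in Figure~\ref{fig:11} that turns $P^+ \# P^-$ into a $1$-crossing diagram. For $n>1$ you diverge: the paper proceeds by induction, converting $P^{n-1,n-1} \# P^+ \# P^-$ into a specific $1$-crossing diagram by further explicit tri-plane moves (Figure~\ref{fig:nn}), whereas you invoke the classical identity $\Pp^{n,n} \cong \Pp^{1,1} \# \Tt^{\,n-1}$ and connect-sum $P^{1,1}$ with $n-1$ copies of the zero-crossing torus diagram $T$. This is exactly the alternative argument the authors themselves record in the remark following Remark~\ref{degen}; it is shorter and requires no additional figures, but it produces a different family of diagrams --- the authors note that diagrams decomposing with $T$-summands are potentially inequivalent as bridge trisections to the ones built in Figure~\ref{fig:nn}, which is their stated reason for preferring the diagrammatic induction. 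Since the lemma asserts only existence of a $1$-crossing diagram, either family suffices, so your argument is complete. One small caveat: your parenthetical claim that the two crossings of $P^+ \# P^-$ must have opposite signs because $e(D)=0$ ``is twice the signed crossing count of the third tangle'' is not justified as stated --- the signs entering $e(D)$ are computed with respect to orientations of the unlink diagrams $D_i \cup \overline{D}_j$, which need not restrict consistently to the tangle $D_3$ when the tri-plane diagram is nonorientable --- but this remark is purely motivational and plays no role in the proof, which rests entirely on the moves exhibited in Figure~\ref{fig:11}.
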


\begin{proof}
First, consider the diagram $P^+ \# P^-$.  By performing mutual braid moves and interior Reidemeister moves as shown in Figure~\ref{fig:11}, we obtain the 1-crossing diagram $P^{1,1}$.  Now, let $P^{n,n}$ be the 1-crossing diagram shown at bottom Figure~\ref{fig:nn}.  Inducting on $n$, we will show that $P^{n,n}$ is a diagram for the surface $\Pp^{n,n}$.  The base case has already been completed.  Suppose by way of induction that $P^{n-1,n-1}$ is a diagram for $\Pp^{n-1,n-1}$.  Then $P^{n-1,n-1} \# P^+ \# P^-$ is a diagram for $\Pp^{n,n}$, and by performing mutual braid moves and interior Reidemeister moves as shown at top in Figure~\ref{fig:nn}, we can convert $P^{n-1,n-1} \# P^+ \# P^-$ into $P^{n,n}$, completing the proof.  (In fact, we note that the diagrams in Figure~\ref{fig:11} are precisely the diagrams in Figure~\ref{fig:nn} in the case $n = 1$, where $P^{0,0}$ here is the diagram $U$.)
\end{proof}

\begin{figure}[h!]
\begin{subfigure}{.7\textwidth}
  \centering
  \includegraphics[width=1\linewidth]{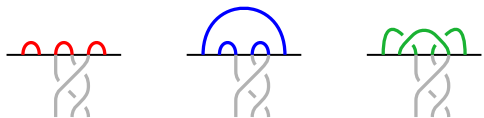}
  \label{fig:11a}
  \caption{The diagram $P^+ \# P^-$\vspace{.4cm}}
\end{subfigure}
\begin{subfigure}{.7\textwidth}
  \centering
  \includegraphics[width=1\linewidth]{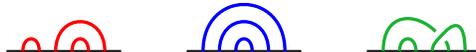}\vspace{-.4cm}
  \label{fig:11b}
  \caption{The diagram $P^{1,1}$}
\end{subfigure}
	\caption{Obtaining $P^{1,1}$ from $P^+ \# P^-$}
\label{fig:11}
\end{figure}

\begin{figure}[h!]
\begin{subfigure}{.8\textwidth}
  \centering
  \includegraphics[width=1\linewidth]{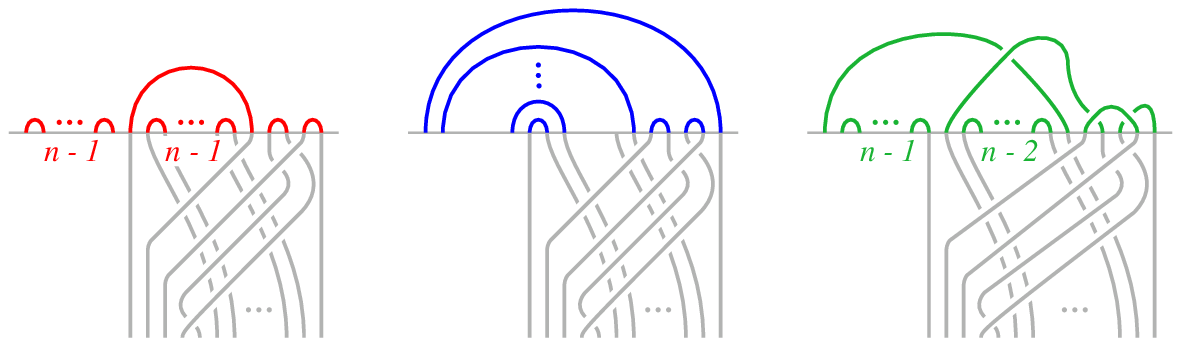}
  \label{fig:11a}
  \caption{The diagram $P^{n-1,n-1} \# P^+ \# P^-$\vspace{.4cm}}
\end{subfigure}
\begin{subfigure}{.8\textwidth}
  \centering
  \includegraphics[width=1\linewidth]{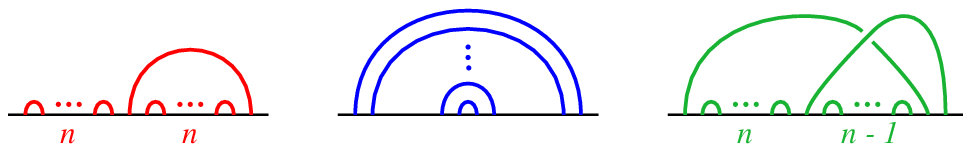}\vspace{-.4cm}
  \label{fig:11b}
  \caption{The diagram $P^{n,n}$}
\end{subfigure}
	\caption{Obtaining $P^{n,n}$ from $P^{n-1,n-1} \# P^+ \# P^-$}
\label{fig:nn}
\end{figure}

We also need another family of diagrams.

\begin{lemma}\label{plusone}
For each $n \geq 0$, there exists a 1-crossing diagram $P^{n+1,n}$ for $\Pp^{n+1,n}$.
\end{lemma}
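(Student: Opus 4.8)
The plan is to mimic the inductive strategy used in Lemma~\ref{equal}, but now tracking a fixed "extra" copy of $\Pp^+$ throughout the induction. For the base case $n = 0$, we want a $1$-crossing diagram $P^{1,0}$ for $\Pp^{1,0} = \Pp^+$. But $\Pp^+$ already has the $1$-crossing diagram $P^+$ shown in Figure~\ref{fig:unknot}, so the base case is immediate. (Note that $e(\Pp^+) = 2$, so by Lemma~\ref{euler} this diagram is in fact crossing-optimal, which is a good consistency check.)

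For the inductive step, suppose $P^{n,n-1}$ is a $1$-crossing diagram for $\Pp^{n,n-1}$; equivalently, suppose we have the diagram $P^{n-1+1,n-1}$ from the previous stage. Then $P^{n,n-1} \# P^+ \# P^-$ is a diagram for $\Pp^{n,n-1} \# \Pp^+ \# \Pp^- = \Pp^{n+1,n}$, and by construction $c(P^{n,n-1} \# P^+ \# P^-) = 1 + c(P^+) + c(P^-)$. Since the diagrams $P^{\pm}$ in Figure~\ref{fig:unknot} are concentrated $1$-crossing diagrams, this boundary connected sum has three crossings, all sitting in the third tangle. The task is then to exhibit a sequence of mutual braid transpositions and interior Reidemeister moves that cancels two of these three crossings, producing a $1$-crossing diagram $P^{n+1,n}$. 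This is exactly the same cancellation phenomenon already carried out in Figure~\ref{fig:nn} for the $\Pp^{n,n}$ family: the $P^+ \# P^-$ "block" being appended behaves identically regardless of whether it is glued onto $P^{n-1,n-1}$ or onto $P^{n,n-1}$, since the gluing happens at bridge points far from where the moves take place. So I would define $P^{n+1,n}$ to be the diagram obtained by the same local reconfiguration (to be displayed in an accompanying figure analogous to Figure~\ref{fig:nn}), verify it has exactly one crossing, and conclude by induction.

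The main obstacle — really the only one — is purely combinatorial/diagrammatic: one must draw the explicit sequence of moves on the appended $P^+ \# P^-$ block and check at each stage that the three tangle diagrams remain trivial tangle diagrams and that pairwise unions remain unlink diagrams, so that the moves are legitimate tri-plane moves. Since the relevant local picture is independent of $n$ (the portion of the diagram coming from $P^{n,n-1}$ is untouched, carrying the identity braid on the strands it contributes), this verification is identical to the one already needed for Lemma~\ref{equal}, and no new difficulty arises. An equivalent and perhaps cleaner packaging: observe that $\Pp^{n+1,n} = \Pp^{n,n} \# \Pp^+$, take the $1$-crossing diagram $P^{n,n}$ from Lemma~\ref{equal}, form $P^{n,n} \# P^+$, and cancel crossings — but this merely relocates the same diagrammatic check, so I would present whichever version yields the simplest figure.
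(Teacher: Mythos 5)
Your proposal is correct, and in fact your closing remark \emph{is} the paper's proof: the paper handles $n=0$ by setting $P^{1,0}=P^+$ and, for $n>0$, writes $\Pp^{n+1,n}=\Pp^{n,n}\#\Pp^+$, forms the two-crossing diagram $P^{n,n}\# P^+$ using Lemma~\ref{equal}, and cancels one crossing by mutual braid transpositions and interior Reidemeister moves (Figure~\ref{fig:plusone}). Your primary route is a slightly different decomposition, $\Pp^{n+1,n}=\Pp^{n,n-1}\#\Pp^+\#\Pp^-$, run as a fresh induction parallel to Lemma~\ref{equal}; this starts from a three-crossing diagram rather than a two-crossing one, so it asks the diagrammatic cancellation to do a bit more work, and it rests on the (plausible but unverified) claim that the moves of Figure~\ref{fig:nn} are genuinely local to the appended $P^+\# P^-$ block and indifferent to what sits to its left. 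The paper's packaging avoids re-running an induction by leaning on Lemma~\ref{equal} once, and only needs to cancel a single crossing. Either way the substance is the same explicit sequence of tri-plane moves exhibited in a figure, which you correctly identify as the only real content of the lemma.
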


\begin{proof}
When $n=0$, we let $P^{1,0} = P^+$, which has one crossing.  Now, suppose that $n > 0$.  By Lemma~\ref{equal}, $P^{n,n}$ is a diagram for $\Pp^{n,n}$, and thus $P^{n,n} \# P^+$ is a diagram for $\Pp^{n+1,n}$.  Using mutual braid moves and interior Reidemeister moves, we can convert $P^{n,n} \# P^+$ into the 1-crossing diagram $P^{n+1,n}$ shown in Figure~\ref{fig:plusone}.
\end{proof}

\begin{figure}[h!]
\begin{subfigure}{.8\textwidth}
  \centering
  \includegraphics[width=1\linewidth]{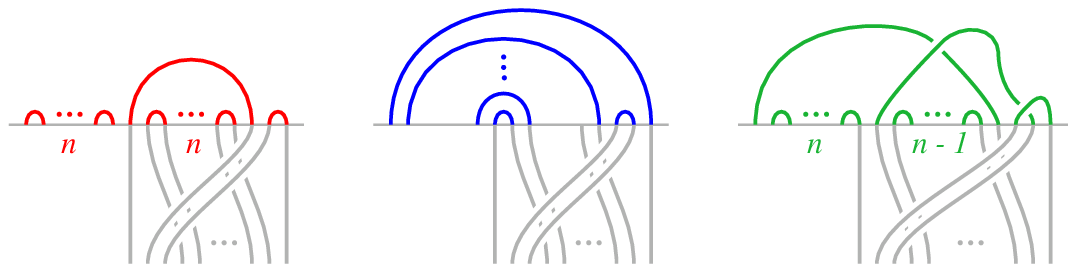}
  \label{fig:11a}
  \caption{The diagram $P^{n,n} \# P^+$\vspace{.4cm}}
\end{subfigure}
\begin{subfigure}{.8\textwidth}
  \centering
  \includegraphics[width=1\linewidth]{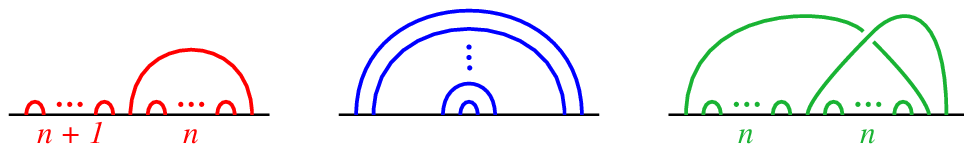}\vspace{-.4cm}
  \label{fig:11b}
  \caption{The diagram $P^{n+1,n}$}
\end{subfigure}
	\caption{Obtaining $P^{n+1,n}$ from $P^{n,n} \# P^+$}
\label{fig:plusone}
\end{figure}

We now proceed to the proof of the first main theorem.

\begin{proof}[Proof of Theorem~\ref{main}]
Consider $\Pp^{n,m}$, and note that $\Pp^{m,n}$ is the mirror image of $\Pp^{n,m}$, so that $c(\Pp^{n,m}) = c(\Pp^{m,n})$.  Thus, we may suppose without loss of generality that $n \geq m$.  Since $\Pp^{n,m}$ is nonorientable, Lemma~\ref{nonor} asserts that $c(\Pp^{n,m}) \geq 1$.  Additionally, Lemma~\ref{euler} implies that
\[c(\Pp^{n,m}) \geq \frac{1}{2}e(\Pp^{n,m}) = |n-m|.\]
Suppose first that $n=m$.  By Lemma~\ref{equal}, $c(\Pp^{n,n})$ is at most one, and thus
\[ c(\Pp^{n,n}) = 1.\]
On the other hand, suppose $n > m$, and let $j = n-m - 1$, so that $j \geq 0$.  Then we can express $\Pp^{n,m}$ as the connected sum of $\Pp^{n+1,n}$ and $j$ copies of $\Pp^+$, and a diagram for $\Pp^{n,m}$ can be obtained by taking the connected sum of the 1-crossing diagram $P^{n+1,n}$ from Lemma~\ref{plusone} and $j$ copies of the diagram $P^+$, which has $1+ j = n-m$ crossings in total.  It follows that $c(\Pp^{n,m}) \leq n-m$.  Taken together with the inequality above, we have
\[ c(\Pp^{n,m}) = n-m.\]
In every case, we conclude that the desired equality holds; that is, for any values of $n$ and $m$, we have
\[ c(\Pp^{n,m}) = \max\{1,|n-m|\}.\]
\end{proof}

\begin{remark}\label{degen}
It follows from the main theorem that, for example,
\[ c(\Pp^+ \# \Pp^-) < c(\Pp^+) + c(\Pp^-),\]
and so crossing number is not additive under the connected sum operation.  Even more strongly, for every $n$, there exist surfaces $\K = \Pp^{n,0}$ and $\K' = \Pp^{0,n}$ such that $c(\K) = c(\K') = n$, but $c(\K \# \K') = 1$.

Turning our attention to bridge number, there is an example due to Viro of a knotted 2-sphere $\K$ such that $\Pp^+ \# \K = \Pp^+$~\cite{viro}.  By~\cite{MZB1} and~\cite{MZ}, the we have $b(\K) \geq 4$, and thus it is known that bridge number is also not additive under connected summation.  There are no known examples of such degeneration for either invariant when the operation is restricted to the class of orientable surfaces, but finding such examples would be an interesting avenue of future research.
\end{remark}

\begin{remark}
An alternative proof of Lemmas~\ref{equal} and~\ref{plusone} uses the fact that $\Pp^+ \# \T = \Pp^+ \# \Pp^+ \# \Pp^-$, and so for $n >1$, we have $\Pp^{n,n} = \T^{n-1} \# \Pp^{1,1}$, and $\Pp^{n+1,n} = \Pp^+ \# T^n$, and the corresponding diagrams have a single crossing.  The above proofs are interesting because they produce diagrams $P^{n,n}$ and $P^{n+1,n}$ that are potentially inequivalent to diagrams that decompose with $T$ summands.  Indeed, Jeffrey Meier has shown that the diagrams $P^+ \# T$ and $P^+ \# P^+ \# P^-$ correspond to inequivalent bridge trisections by examining their underlying cubic graphs~\cite{jeffPersonal}.
\end{remark}

\section{Converting ch-diagrams to tri-plane diagrams}\label{sec:convert}

Another tool to encode knotted surfaces in 4-space is a \emph{ch-diagram}, which appeared in work of Yoshikawa~\cite{yoshi}.  Whereas a classical knot diagram is an immersed curve in the plane with crossing information at each double point, ch-diagrams contain both crossings and \emph{marked vertices}, as shown in Figure~\ref{fig:marked}.  Marked vertices admit a $+$ and $-$ resolution, as shown.

\begin{figure}[h!]
  \centering
  \includegraphics[width=.07\linewidth]{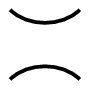} \quad
  \raisebox{0.4cm}{$\stackrel{\text{+}}{\longleftarrow}$} \quad
  \includegraphics[width=.07\linewidth]{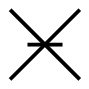} \quad
  \raisebox{0.4cm}{$\stackrel{\text{-}}{\longrightarrow}$} \quad
  \includegraphics[width=.07\linewidth]{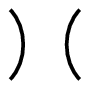}
	\caption{A marked vertex (center) along with its two resolutions}
\label{fig:marked}
\end{figure}

A \emph{marked diagram} $M$ is an immersed curve or curves in the plane such that each double point is either a crossing or a marked vertex.  As such, a marked diagram has a $+$ resolution $M_+$, obtained by performing the $+$ resolution on all marked vertices, and a $-$ resolution $M_-$, obtained by performing the $-$ resolution on all marked vertices.  Note that $M_+$ and $M_-$ are classical link diagrams.  Finally, a \emph{ch-diagram} is defined to be a marked diagram $M$ such that each resolution $M_{\pm}$ is a classical diagram for an unlink, a link isotopic to the disjoint union of unknotted loops.  As is the case with tri-plane diagrams, every surface in 4-space admits a ch-diagram, and any two ch-diagrams are related by a finite collection of moves, called \emph{Yoshikawa moves}~\cite{yoshi,swenton,kearton-kurlin}.  An example of a ch-diagram for the surface $\Pp^+$ (denoted $2^{-1}_1$ in Yoshikawa's table) is shown in Figure~\ref{fig:ex}.

To convert a ch-diagram to a tri-plane diagram, we (implicitly) pass through yet another closely related concept, called a \emph{banded unlink diagram}.  Banded unlink diagrams are described in~\cite{MZB1}; we refer the reader to that paper for further details.  Briefly, every marked vertex can be converted to a band as in Figure~\ref{fig:band}, and this process changes the ch-diagram $M$ to a banded link diagram $(M_-,v)$, where the resolution of the unlink $M_-$ along the bands $v$ yields the unlink $M_+$.  

\begin{figure}[h!]
  \centering
  \includegraphics[width=.07\linewidth]{figures/marked.eps} \quad
  \raisebox{0.4cm}{$\longrightarrow$} \quad
  \includegraphics[width=.07\linewidth]{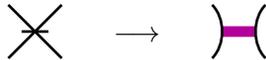}
	\caption{Changing a marked vertex to a band}
\label{fig:band}
\end{figure}

We say that a ch-diagram $M$ is in \emph{bridge position} if there is some height function on $M$ such that all maxima of $M$ occur above all minima, all markings on marked vertices are contained in a single (planar) regular level $P$ separating the minima and the maxima, and the arcs containing the maxima can be isotoped into $P$ to be disjoint from the marked vertices and such that the image of the isotopy is a collection of embedded arcs (with no crossings and no closed components).  In this case, the corresponding banded link presentation $(M_-,v)$ admits a \emph{banded bridge splitting}, which gives rise to a bridge trisection by Lemma 3.2 of~\cite{MZB1}.

A ch-diagram $M$ in bridge position yields a tri-plane diagram via the following procedure:  Let $P'$ be a regular level just below the marked vertices, cutting $M$ into two diagrams $M'$ and $M''$, where $M'$ contains the marked vertices.  Letting $D_1 = M'_-$, $D_2 = M'_+$, and $D_3 = \overline{M''}$, it follows from~\cite{MZB1} that $D = (D_1,D_2,D_3)$ is a tri-plane diagram for the knotted surface $\K$ determined by $M$.  An example of this process using Yoshikawa's ch-diagram $2^{-1}_1$ for $\Pp^+$ is carried out in Figure~\ref{fig:ex}, including tri-plane moves showing that the converted diagram is equivalent to the standard diagram $P^+$.

\begin{figure}[h!]
\begin{subfigure}{.6\textwidth}
  \centering
  \includegraphics[width=.175\linewidth]{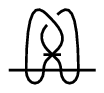}\vspace{-.2cm}
  \label{fig:braid1}
  \caption{ch-diagram in bridge position\vspace{.4cm}}
\end{subfigure}
\begin{subfigure}{.7\textwidth}
  \centering
  \includegraphics[width=0.5\linewidth]{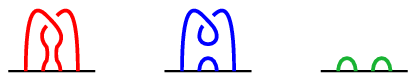}\vspace{-0.2cm}
  \label{fig:braid2}
  \caption{Result of converting ch-diagram to tri-plane diagram\vspace{0.4cm}}
\end{subfigure}
\begin{subfigure}{.7\textwidth}
  \centering
  \includegraphics[width=0.5\linewidth]{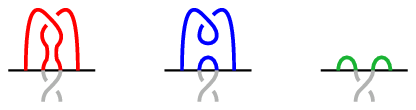}\vspace{-.2cm}
  \label{fig:braid3}
  \caption{Braid and Reidemeister moves convert the diagram to $P^+$}
\end{subfigure}
	\caption{Converting a ch-diagram to a tri-plane diagram}
\label{fig:ex}
\end{figure}

\section{A library of tri-plane diagrams}\label{sec:library}

In this section, we carry out the procedure described above to convert the 23 ch-diagrams given by Yoshikawa in~\cite{yoshi} into tri-plane diagrams.  For some diagrams, we perform tri-plane moves to decrease the crossing numbers.  We begin with a class of knotted 2-spheres introduced by Artin called \emph{spun knots}~\cite{Artin}, since two of the surfaces ($8_1$ and $10_1$ in the Yoshikawa table) fall into this family.  Tri-plane diagrams for spun knots appeared in~\cite{MZB1}, in which the authors determined the minimal bridge number, but we reproduce the argument below to prove the next lemma.

\begin{lemma}\label{spun}
Suppose that $\K$ is the spin of a 2-bridge knot $K$.  Then $b(\K) = 4$ and $c(\K) \leq 2 c(K)$.
\end{lemma}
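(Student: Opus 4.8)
The plan is to exhibit an explicit tri-plane diagram for $\K = $ spin$(K)$ built directly from a minimal diagram of the $2$-bridge knot $K$, and then to read off the bridge number and an upper bound for the crossing number from that diagram. The starting point is the standard picture of the spun knot as a banded unlink: take a $2$-bridge diagram of $K$ with a single minimum and a single maximum (so the "tangle part'' of $K$ lives between two horizontal levels), puncture it to get an arc presentation of the knotted arc, and spin. Concretely, the spun knot $\K$ has a ch-diagram consisting of the plat-like picture of $K$ together with a single marked vertex (band) that closes up the spinning; equivalently, its banded unlink presentation is a $2$-component unlink with one band, where resolving the band returns a single unknotted $2$-sphere's worth of data. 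I would put this ch-diagram in bridge position in the sense of Section~\ref{sec:convert}: the two maxima of the plat sit above the single marked vertex, which sits above the two minima, so $b = 4$.

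Next I would run the conversion procedure of Section~\ref{sec:convert} on this bridge-position ch-diagram to produce $D = (D_1, D_2, D_3)$ with $D_1 = M'_-$, $D_2 = M'_+$, $D_3 = \overline{M''}$. Because the marked vertex is the only "interesting'' feature of $M'$, two of the three tangles $D_i$ can be arranged to be crossingless (the two resolutions of the band sit in the bridge sphere region and can be combed straight), while the third tangle absorbs a copy of the $2$-bridge tangle of $K$ — and then, by pulling the band through, a second copy of (the mirror of) that tangle. This is the source of the factor $2$: the worst of the three tangles ends up looking like the tangle sum of $K$ with itself (or with its reverse/mirror), hence has at most $2c(K)$ crossings, while the other two contribute $0$. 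Thus $c(D) \leq 2c(K)$, giving $c(\K) \leq 2c(K)$.

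For the bridge number, the upper bound $b(\K) \leq 4$ is immediate from the diagram just constructed. The lower bound $b(\K) \geq 4$ is where I expect to lean on earlier results rather than reprove from scratch: a tri-plane diagram with $b \leq 3$ yields a $(b; c_1,c_2,c_3)$-bridge trisection with very small complexity, and by the Euler characteristic formula $\chi(\K) = c_1 + c_2 + c_3 - b$ together with the classification of surfaces admitting such low-complexity bridge trisections (from~\cite{MZB1}, and the fact that $1$- and $2$-bridge trisected $2$-spheres are unknotted), $K$ would have to be the unknot, contradicting that $K$ is a genuine $2$-bridge knot. So $b(\K) = 4$.

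The main obstacle is the middle step: verifying carefully that the conversion really does concentrate all the crossings into one tangle and that this tangle is (at most) the self-sum of the $K$-tangle, i.e., controlling exactly how the spun-knot band interacts with the bridge splitting. This is essentially a bookkeeping argument about the standard banded-unlink model for spun knots, and it is exactly the computation that was carried out in~\cite{MZB1}; I would reproduce their picture, check the two auxiliary tangles are trivial, and count crossings in the third.
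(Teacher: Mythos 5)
Your proposal follows essentially the same route as the paper: both start from a plat (4-plat) presentation of the 2-bridge knot $K$, build the standard ch-diagram for the spun knot from the braid and its reflection, convert it to a tri-plane diagram via the procedure of Section~\ref{sec:convert}, and simplify by braid and Reidemeister moves to a concentrated 4-bridge diagram with $2c(K)$ crossings, with the lower bound $b(\K)\geq 4$ coming from the classification of surfaces with low-complexity bridge trisections in~\cite{MZ} exactly as you suggest. The only caveat is that the "bookkeeping" step you flag (that two tangles are crossingless and the third carries the braid together with its reflection) is precisely the content of the paper's Figure~\ref{fig:braid}, reproduced from~\cite{MZB1}, so your deferral to that computation is appropriate rather than a gap.
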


\begin{proof}
Every 2-bridge knot $K$ has a minimal crossing diagram obtained by taking the plat closure of a braid on four strands, and such that the right-most strand contains no crossings.  As such, $\K$ has a ch-diagram as shown in Figure~\ref{fig:braid}.  Converting this ch-diagram to a tri-plane diagram and performing braids moves and Reidemeister moves produces the tri-plane diagram at bottom in Figure~\ref{fig:braid}, which has $2 c(K)$ crossings.  Thus, $c(\K) \leq 2 c(K)$.  Since $\K$ is not the unknotted 2-sphere, we have that $b(\K) \geq 4$ by~\cite{MZ}, and thus $b(\K) = 4$.
\end{proof}

\begin{figure}[h!]
\begin{subfigure}{1\textwidth}
  \centering
  \includegraphics[width=.21\linewidth]{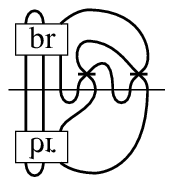}
  \label{fig:braid1}
  \caption{ch-diagram, where \emph{br} represents a braid (and reflected \emph{br} the reflection of the braid) \vspace{.4cm}}
\end{subfigure}
\begin{subfigure}{.7\textwidth}
  \centering
  \includegraphics[width=1\linewidth]{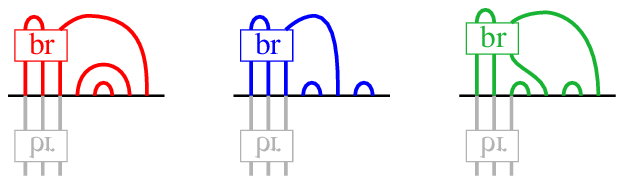}\vspace{-0.4cm}
  \label{fig:braid2}
  \caption{Result of converting ch-diagram to tri-plane diagram\vspace{0.4cm}}
\end{subfigure}
\begin{subfigure}{.7\textwidth}
  \centering
  \includegraphics[width=1\linewidth]{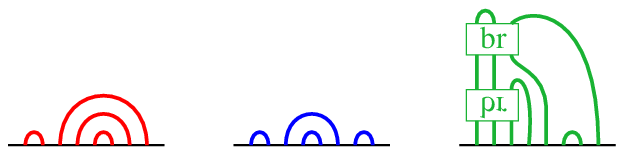}\vspace{-.4cm}
  \label{fig:braid3}
  \caption{Crossing number lowered by braid and Reidemeister moves}
\end{subfigure}
	\caption{A tri-plane diagram for a spun knot $\K$}
\label{fig:braid}
\end{figure}

In the table below, we collect data for each knotted surface in Yoshikawa's table, with the associated tri-plane diagrams appearing in the referenced figures.  While the each entry for the crossing number is only an upper bound, the Theorem~\ref{main2} verifies that each bridge number is minimal.  In addition, the \emph{type} of a surface is its homeomorphism class as a 2-manifold; we use $\Ss^2$, $\PP^2$, and $\TT^2$ to denote the 2-sphere, projective plane, and torus, respectively.

\begin{proof}[Proof of Theorem~\ref{main2}]
We obtain lower bounds for most of the bridge numbers by considering the bridge numbers of individual components of $\K$ separately.   For example, if the type of $\K$ is $\Ss^2 \sqcup \TT^2$, we know that the $\Ss^2$ component on its own has bridge number of at least one, while the $\TT^2$ component on its own has bridge number of at least three, and thus the bridge number of $\K$ is at least four.  Similarly, each $\PP^2$ component contributes bridge number two to the sum, and each $\PP^2 \# \PP^2$ component contributes bridge number three to the sum.  Each surface $\K$ in the table realizes the sum of minimum possible bridge numbers of its components, with the exception of the surfaces $8_1$, $9_1$, $10_1$, $10_2$, $10_3$, and $10^1_1$.

Each of the first five surfaces in this list is a 2-sphere $\K$ that is not the unknotted 2-sphere, and since $b(\K) \geq 4$ by~\cite{MZ}, we have $b(\K) = 4$ for these surfaces.  For the remaining surface $10^1_1$, we claim that $b(10^1_1) = 6$.  In Figure~\ref{fig:j}, we see a 6-bridge tri-plane diagram for $10^1_1$, and so certainly $b(10^1_1) \leq 6$.  If $10^1_1$ admits some $(b;c_1,c_2,c_3)$-bridge trisection with $b < 6$, then using the fact that $0 = \chi(10^1_1) = c_1+c_2+c_3 - b$, we have that at least one of the $c_i$'s is equal to one.  By Theorem 4.1 of~\cite{JMMZ}, it follows that $\pi_1(S^4 \setminus 10^1_1)$ admits a presentation with a single generator, and as such it must be a cyclic group.  On the other hand, in~\cite{yoshi}, it is noted that $\pi_1(S^4 \setminus 10^1_1)$ is the same as the fundamental group of the exterior of the trefoil in 3-space, the group $\langle x,y \, | \, xyx = yxy \rangle$, which is neither cyclic nor abelian, yielding a contradiction.  We conclude that $b(10^1_1) = 6$, completing the proof.
\end{proof}

In Table~\ref{table1} below, we record the name (from~\cite{yoshi}), the figure depicting a tri-plane diagram, the topological type, the bridge number, an upper bound for the crossing number, and the Euler number for each surface.  We note that for all surfaces in the table with more than one component, the individual components are unknotted.  The crossing number entry for $10^{0,-2}_2$ is marked with an $*$ because the diagram which minimizes crossing number experimentally has bridge number equal to five, instead of the minimal bridge number for this surface, which is four.  In addition, recall that a tri-plane diagram is \emph{concentrated} if all crossings are contained in a single tangle.  When a diagram which minimizes crossing number is concentrated, we mark the corresponding crossing number with a $c$.

\begin{table}[ht]
\centering
\caption{Tri-plane data for the Yoshikawa table}
\label{table1}
\begin{tabular}{lllllll}
\hline
Label & Figure & Type & Bridge \# & Crossing \# $\leq$ & Euler \#  \\
\hline \hline
$0_1$ & \ref{fig:unknot} & $\Ss^2$ & 1 & $0^c$ & 0 \\
\hline
$2^1_1$ & \ref{fig:unknot} & $\TT^2$ & 1 & $0^c$ & 0 \\
\hline
$2^{-1}_1$ & \ref{fig:unknot} & $\PP^2$ & 2 & $1^c$ & 2 \\
\hline
$6^{0,1}_1$ & \ref{fig:a} & $\Ss^2 \sqcup \TT^2$ & 4 & $4^c$ & 0 \\
\hline
$7^{0,-2}_1$ & \ref{fig:b} & $\Ss^2 \sqcup (\PP^2 \# \PP^2)$ & 4 & $5^c$ & 0 \\
\hline
$8_1$ & \ref{fig:braid} & $\Ss^2$ & 4 & $6^c$ & 0 \\
\hline
$8^{1,1}_1$ & \ref{fig:c} & $\TT^2 \sqcup \TT^2$ & 6 & 6 & 0 \\
\hline
$8^{-1,-1}_1$ & \ref{fig:d} & $\PP^2 \sqcup \PP^2$ & 4 & 8 & 0 \\
\hline
$9_1$ & \ref{fig:e} & $\Ss^2$ & 4 & $7^c$ & 0 \\
\hline
$9^{0,1}_1$ & \ref{fig:f} & $\Ss^2 \sqcup \TT^2$ & 4 & $8^c$ & 0 \\
\hline
$9^{1,-2}_1$ & \ref{fig:g} & $\TT^2 \sqcup (\PP^2 \# \PP^2)$ & 6 & 7 & 0 \\
\hline
$10_1$ & \ref{fig:braid} & $\Ss^2$ & 4 & $8^c$ & 0 \\
\hline
$10_2$ & \ref{fig:h} & $\Ss^2$ & 4 & 11 & 0 \\
\hline
$10_3$ & \ref{fig:i} & $\Ss^2$ & 4 & 11 & 0 \\
\hline
$10^1_1$ & \ref{fig:j} & $\TT^2 $ & 6 & 11 & 0 \\
\hline
$10^{0,1}_1$ & \ref{fig:k} & $\Ss^2 \sqcup \TT^2$ & 4 & 8 & 0 \\
\hline
$10^{0,1}_2$ & \ref{fig:l} & $\Ss^2 \sqcup \TT^2$ & 4 & $8^c$ & 0 \\
\hline
$10^{1,1}_1$ & \ref{fig:m} & $\TT^2 \sqcup \TT^2$ & 6 & 6 & 0 \\
\hline
$10^{0,0,1}_1$ & \ref{fig:n} & $\Ss^2 \sqcup \Ss^2 \sqcup \TT^2 $ & 5 & 8 & 0 \\
\hline
$10^{0,-2}_1$ & \ref{fig:o} & $\Ss^2 \sqcup (\PP^2 \# \PP^2) $ & 4 & $8^c$ & 0 \\
\hline
$10^{0,-2}_2$ & \ref{fig:p} & $\Ss^2 \sqcup (\PP^2 \# \PP^2) $ & 4 & $8^{\ast,c}$ & 0 \\
\hline
$10^{-1,-1}_1$ & \ref{fig:q} & $\PP^2 \sqcup \PP^2$ & 4 & $8^c$ & 0 \\
\hline
$10^{-2,-2}_1$ & \ref{fig:r} & $(\PP^2 \# \PP^2) \sqcup (\PP^2 \# \PP^2)$ & 6 & $8$ & 0 \\
\hline
\end{tabular}
\end{table}

\begin{figure}[h!]
\begin{subfigure}{.3\textwidth}
  \centering
  \includegraphics[width=.7\linewidth]{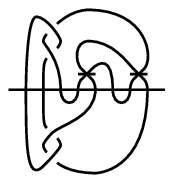}
  \label{fig:a1}
  \caption{ch-diagram\vspace{.4cm}}
\end{subfigure}
\begin{subfigure}{.7\textwidth}
  \centering
  \includegraphics[width=1\linewidth]{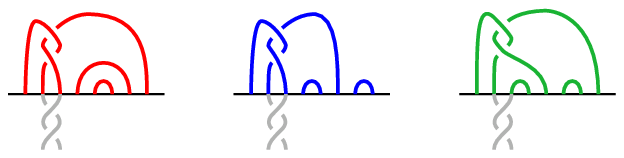}\vspace{-0.4cm}
  \label{fig:a2}
  \caption{Result of converting ch-diagram to tri-plane diagram\vspace{0.4cm}}
\end{subfigure}
\begin{subfigure}{.7\textwidth}
  \centering
  \includegraphics[width=1\linewidth]{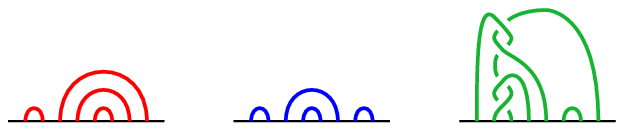}\vspace{-.4cm}
  \label{fig:a3}
  \caption{Crossing number lowered by braid and Reidemeister moves}
\end{subfigure}
	\caption{$6^{0,1}_1$}
\label{fig:a}
\end{figure}

\begin{figure}[h!]
\begin{subfigure}{.3\textwidth}
  \centering
  \includegraphics[width=.7\linewidth]{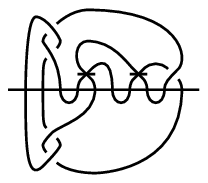}
  \label{fig:b1}
  \caption{ch-diagram\vspace{.4cm}}
\end{subfigure}
\begin{subfigure}{.7\textwidth}
  \centering
  \includegraphics[width=1\linewidth]{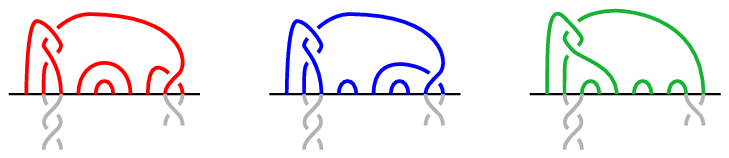}\vspace{-.4cm}
  \label{fig:b2}
  \caption{Result of converting ch-diagram to tri-plane diagram \vspace{.4cm}}
\end{subfigure}
\begin{subfigure}{.7\textwidth}
  \centering
  \includegraphics[width=1\linewidth]{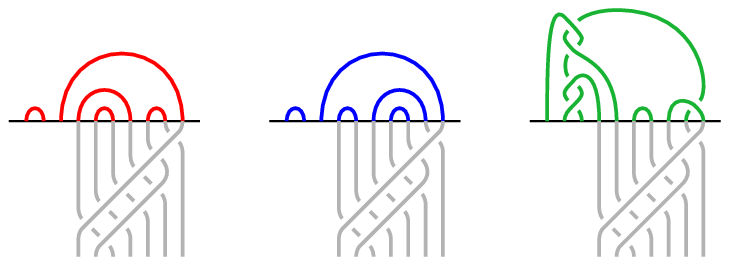}\vspace{-.4cm}
  \label{fig:b4}
  \caption{Crossing number lowered by braid and Reidemeister moves\vspace{.4cm}}
\end{subfigure}
\begin{subfigure}{.7\textwidth}
  \centering
  \includegraphics[width=1\linewidth]{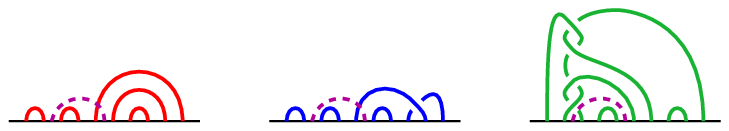}\vspace{-.4cm}
  \label{fig:b5}
  \caption{Braid and Reidemeister moves set up a destabilization\vspace{.4cm}}
\end{subfigure}
\begin{subfigure}{.7\textwidth}
  \centering
  \includegraphics[width=1\linewidth]{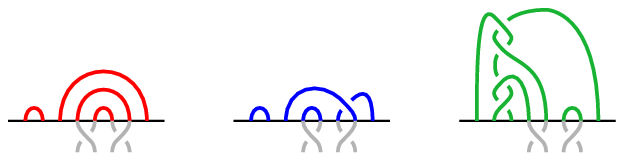}\vspace{-.4cm}
  \label{fig:b6}
  \caption{Result of destabilization\vspace{.4cm}}
\end{subfigure}
\begin{subfigure}{.7\textwidth}
  \centering
  \includegraphics[width=1\linewidth]{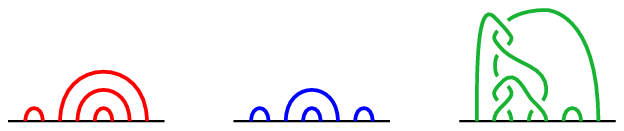}\vspace{-.4cm}
  \label{fig:b7}
  \caption{Result of braid and Reidemeister moves to concentrate diagram}
\end{subfigure}
	\caption{$7^{0,-2}_{1}$}
\label{fig:b}
\end{figure}

\begin{figure}[h!]
\begin{subfigure}{.3\textwidth}
  \centering
  \includegraphics[width=.7\linewidth]{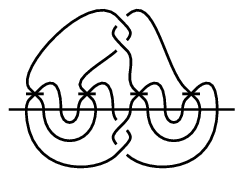}
  \label{fig:c1}
  \caption{ch-diagram\vspace{.4cm}}
\end{subfigure}
\begin{subfigure}{.7\textwidth}
  \centering
  \includegraphics[width=1\linewidth]{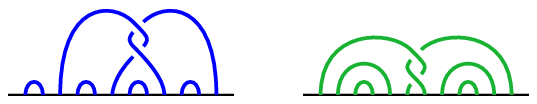}\vspace{-.4cm}
  \label{fig:c2}
  \caption{Result of converting ch-diagram to tri-plane diagram}
\end{subfigure}
	\caption{$8^{1,1}_1$}
\label{fig:c}
\end{figure}

\begin{figure}[h!]
\begin{subfigure}{.3\textwidth}
  \centering
  \includegraphics[width=.7\linewidth]{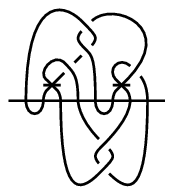}
  \label{fig:d1}
  \caption{ch-diagram\vspace{.4cm}}
\end{subfigure}
\begin{subfigure}{.7\textwidth}
  \centering
  \includegraphics[width=1\linewidth]{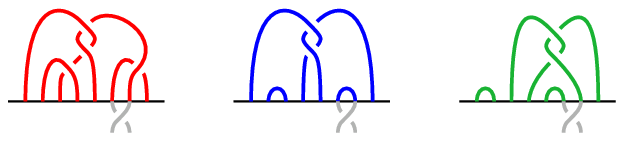}\vspace{-.4cm}
  \label{fig:d2}
  \caption{Result of converting ch-diagram to tri-plane diagram\vspace{.4cm}}
\end{subfigure}
\begin{subfigure}{.7\textwidth}
  \centering
  \includegraphics[width=1\linewidth]{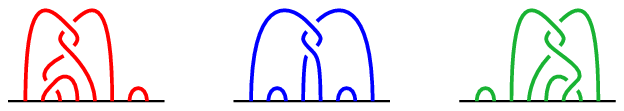}\vspace{-.4cm}
  \label{fig:d3}
  \caption{Crossing number lowered by braid and Reidemeister moves}
\end{subfigure}
	\caption{$8^{-1,-1}_1$}
\label{fig:d}
\end{figure}

\begin{figure}[h!]
\begin{subfigure}{.3\textwidth}
  \centering
  \includegraphics[width=.7\linewidth]{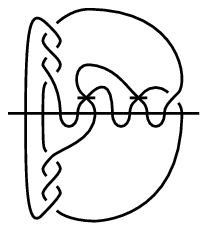}
  \label{fig:e1}
  \caption{ch-diagram\vspace{.4cm}}
\end{subfigure}
\begin{subfigure}{.7\textwidth}
  \centering
  \includegraphics[width=1\linewidth]{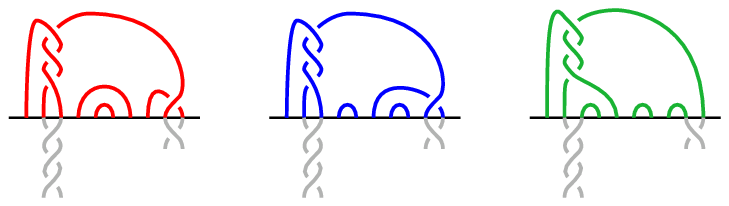}\vspace{-.4cm}
  \label{fig:e2}
  \caption{Result of converting ch-diagram to tri-plane diagram\vspace{.4cm}}
\end{subfigure}
\begin{subfigure}{.7\textwidth}
  \centering
  \includegraphics[width=1\linewidth]{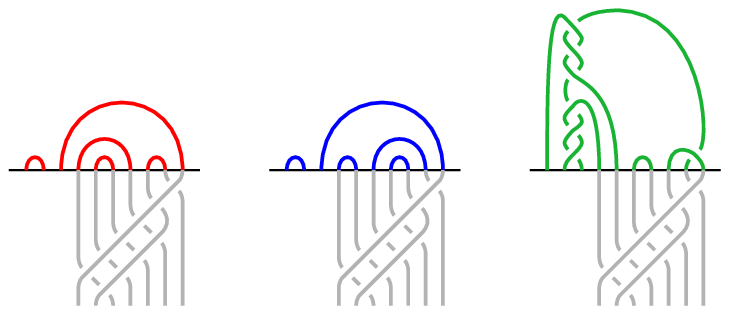}\vspace{-.4cm}
  \label{fig:e4}
  \caption{Crossing number lowered by braid and Reidemeister moves\vspace{.4cm}}
\end{subfigure}
\begin{subfigure}{.7\textwidth}
  \centering
  \includegraphics[width=1\linewidth]{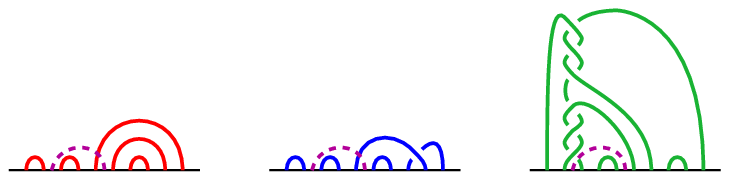}\vspace{-.4cm}
  \label{fig:e5}
  \caption{Braid and Reidemeister moves set up a destabilization\vspace{.4cm}}
\end{subfigure}
\begin{subfigure}{.7\textwidth}
  \centering
  \includegraphics[width=1\linewidth]{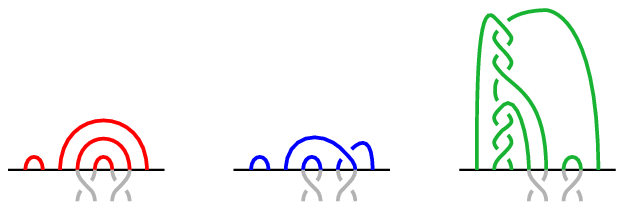}\vspace{-.4cm}
  \label{fig:e6}
  \caption{Result of destabilization\vspace{.4cm}}
\end{subfigure}
\begin{subfigure}{.7\textwidth}
  \centering
  \includegraphics[width=1\linewidth]{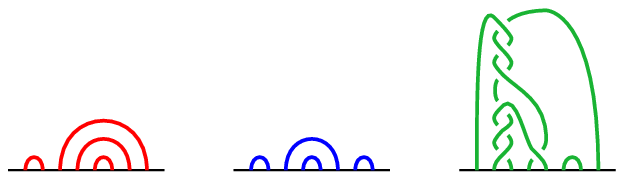}\vspace{-.4cm}
  \label{fig:e7}
  \caption{Result of braid and Reidemeister moves to concentrate diagram}
\end{subfigure}
	\caption{$9_1$}
\label{fig:e}
\end{figure}

\begin{figure}[h!]
\begin{subfigure}{.3\textwidth}
  \centering
  \includegraphics[width=.7\linewidth]{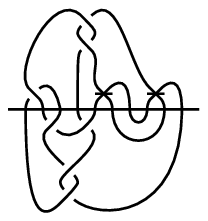}
  \label{fig:f1}
  \caption{ch-diagram\vspace{.4cm}}
\end{subfigure}
\begin{subfigure}{.7\textwidth}
  \centering
  \includegraphics[width=1\linewidth]{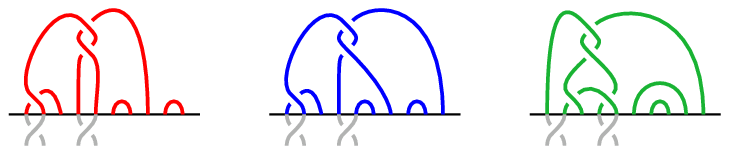}\vspace{-.4cm}
  \label{fig:f2}
  \caption{Result of converting ch-diagram to tri-plane diagram\vspace{.4cm}}
\end{subfigure}
\begin{subfigure}{.7\textwidth}
  \centering
  \includegraphics[width=1\linewidth]{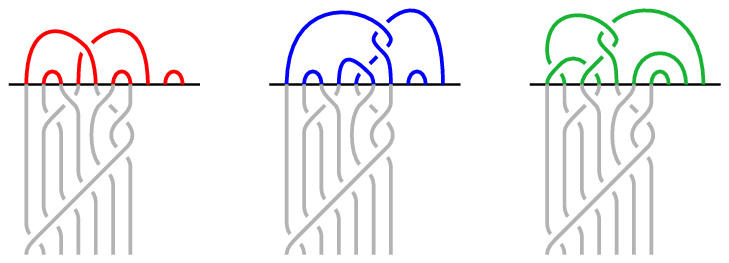}\vspace{-.4cm}
  \label{fig:f3}
  \caption{Crossing number lowered by braid and Reidemeister moves\vspace{.4cm}}
\end{subfigure}
\begin{subfigure}{.7\textwidth}
  \centering
  \includegraphics[width=1\linewidth]{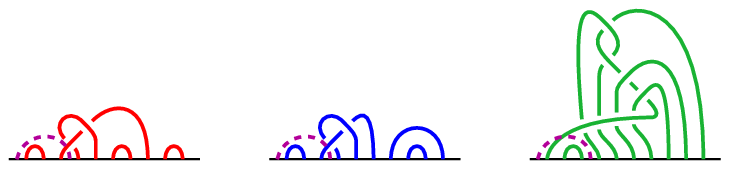}\vspace{-.4cm}
  \label{fig:f4}
  \caption{Braid and Reidemeister moves set up a destabilization\vspace{.4cm}}
\end{subfigure}
\begin{subfigure}{.7\textwidth}
  \centering
  \includegraphics[width=1\linewidth]{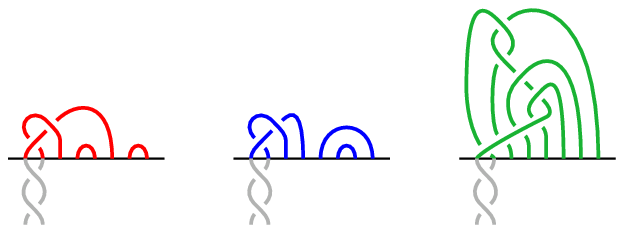}\vspace{-.4cm}
  \label{fig:f5}
  \caption{Result of destabilization\vspace{.4cm}}
\end{subfigure}
\begin{subfigure}{.7\textwidth}
  \centering
  \includegraphics[width=1\linewidth]{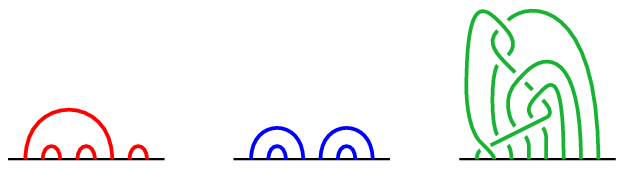}\vspace{-.4cm}
  \label{fig:f6}
  \caption{Result of braid and Reidemeister moves to concentrate diagram}
\end{subfigure}
	\caption{$9^{0,1}_1$}
\label{fig:f}
\end{figure}

\begin{figure}[h!]
\begin{subfigure}{.3\textwidth}
  \centering
  \includegraphics[width=.7\linewidth]{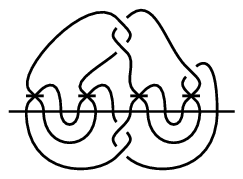}
  \label{fig:g1}
  \caption{ch-diagram\vspace{.4cm}}
\end{subfigure}
\begin{subfigure}{.7\textwidth}
  \centering
  \includegraphics[width=1\linewidth]{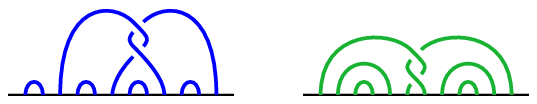}\vspace{-.4cm}
  \label{fig:g2}
  \caption{Result of converting ch-diagram to tri-plane diagram}
\end{subfigure}
	\caption{$9^{1,-2}_1$}
\label{fig:g}
\end{figure}

\begin{figure}[h!]
\begin{subfigure}{.3\textwidth}
  \centering
  \includegraphics[width=.7\linewidth]{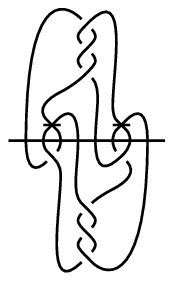}
  \label{fig:h1}
  \caption{ch-diagram\vspace{.4cm}}
\end{subfigure}
\begin{subfigure}{.7\textwidth}
  \centering
  \includegraphics[width=1\linewidth]{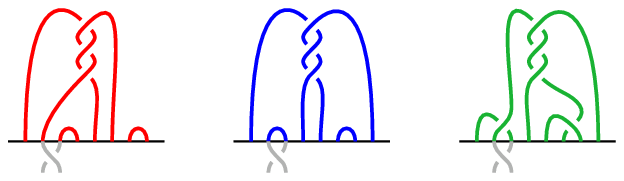}\vspace{-0.4cm}
  \label{fig:h2}
  \caption{Result of converting ch-diagram to tri-plane diagram\vspace{0.4cm}}
\end{subfigure}
\begin{subfigure}{.7\textwidth}
  \centering
  \includegraphics[width=1\linewidth]{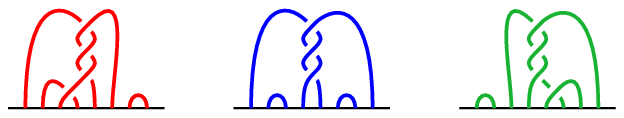}\vspace{-.4cm}
  \label{fig:h3}
  \caption{Crossing number lowered by braid and Reidemeister moves}
\end{subfigure}
	\caption{$10_2$}
\label{fig:h}
\end{figure}

\begin{figure}[h!]
\begin{subfigure}{.3\textwidth}
  \centering
  \includegraphics[width=.7\linewidth]{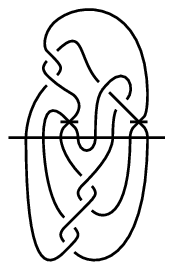}
  \label{fig:i1}
  \caption{ch-diagram\vspace{.4cm}}
\end{subfigure}
\begin{subfigure}{.7\textwidth}
  \centering
  \includegraphics[width=1\linewidth]{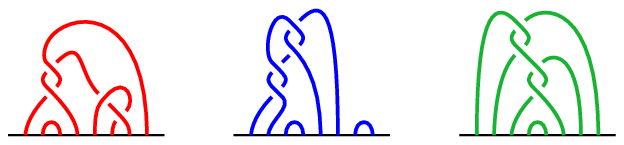}\vspace{-0.4cm}
  \label{fig:i2}
  \caption{Result of converting ch-diagram to tri-plane diagram\vspace{0.4cm}}
\end{subfigure}
\begin{subfigure}{.7\textwidth}
  \centering
  \includegraphics[width=1\linewidth]{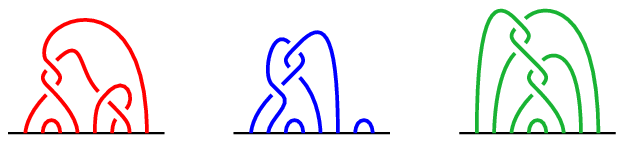}\vspace{-.4cm}
  \label{fig:h3}
  \caption{Crossing number lowered by Reidemeister moves}
\end{subfigure}
	\caption{$10_3$}
\label{fig:i}
\end{figure}

\begin{figure}[h!]
\begin{subfigure}{.3\textwidth}
  \centering
  \includegraphics[width=.7\linewidth]{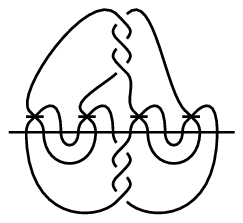}
  \label{fig:j1}
  \caption{ch-diagram\vspace{.4cm}}
\end{subfigure}
\begin{subfigure}{.7\textwidth}
  \centering
  \includegraphics[width=1\linewidth]{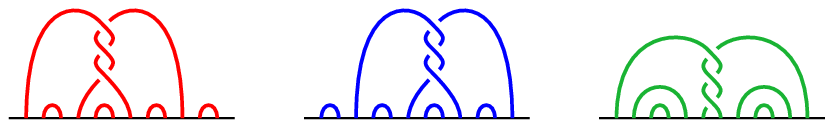}\vspace{-.4cm}
  \label{fig:j2}
  \caption{Result of converting ch-diagram to tri-plane diagram}
\end{subfigure}
	\caption{$10^1_1$}
\label{fig:j}
\end{figure}

\begin{figure}[h!]
\begin{subfigure}{.3\textwidth}
  \centering
  \includegraphics[width=.7\linewidth]{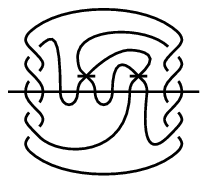}
  \label{fig:k1}
  \caption{ch-diagram\vspace{.4cm}}
\end{subfigure}
\begin{subfigure}{.7\textwidth}
  \centering
  \includegraphics[width=1\linewidth]{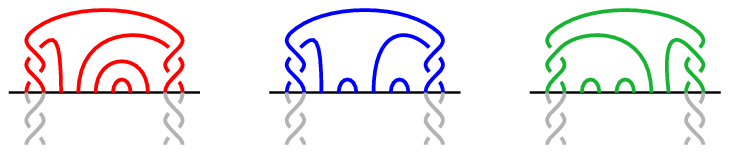}\vspace{-0.4cm}
  \label{fig:k2}
  \caption{Result of converting ch-diagram to tri-plane diagram\vspace{0.4cm}}
\end{subfigure}
\begin{subfigure}{.7\textwidth}
  \centering
  \includegraphics[width=1\linewidth]{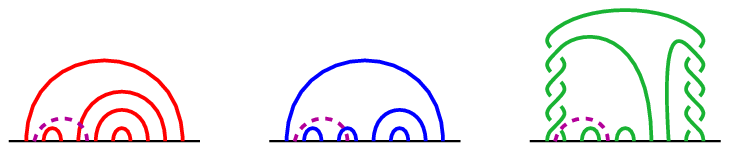}\vspace{-.4cm}
  \label{fig:k3}
  \caption{Crossing number lowered by braid and Reidemeister moves}
\end{subfigure}
\begin{subfigure}{.7\textwidth}
  \centering
  \includegraphics[width=1\linewidth]{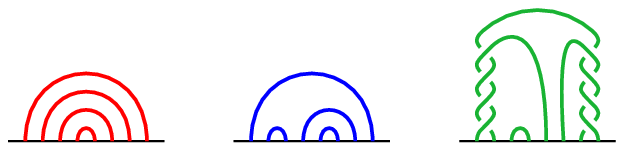}\vspace{-.4cm}
  \label{fig:k4}
  \caption{Result of destabilization\vspace{.4cm}}
\end{subfigure}
	\caption{$10^{0,1}_1$}
\label{fig:k}
\end{figure}

\begin{figure}[h!]
\begin{subfigure}{.3\textwidth}
  \centering
  \includegraphics[width=.7\linewidth]{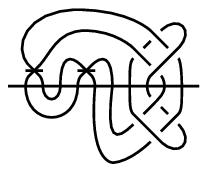}
  \label{fig:l1}
  \caption{ch-diagram\vspace{.4cm}}
\end{subfigure}
\begin{subfigure}{.7\textwidth}
  \centering
  \includegraphics[width=1\linewidth]{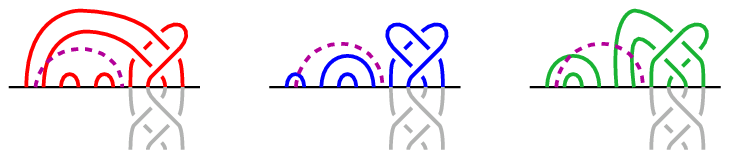}\vspace{-0.4cm}
  \label{fig:l2}
  \caption{Result of converting ch-diagram to tri-plane diagram\vspace{0.4cm}}
\end{subfigure}
\begin{subfigure}{.7\textwidth}
  \centering
  \includegraphics[width=1\linewidth]{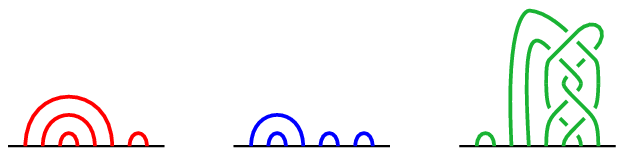}\vspace{-.4cm}
  \label{fig:l3}
  \caption{Destabilization and crossing number lowered by braid and Reidemeister moves}
\end{subfigure}
	\caption{$10^{0,1}_2$}
\label{fig:l}
\end{figure}

\begin{figure}[h!]
\begin{subfigure}{.3\textwidth}
  \centering
  \includegraphics[width=.7\linewidth]{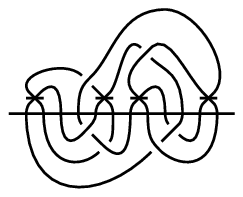}
  \label{fig:m1}
  \caption{ch-diagram\vspace{.4cm}}
\end{subfigure}
\begin{subfigure}{.7\textwidth}
  \centering
  \includegraphics[width=1\linewidth]{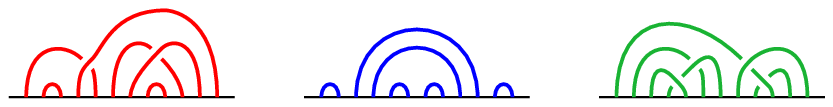}\vspace{-.4cm}
  \label{fig:m2}
  \caption{Result of converting ch-diagram to tri-plane diagram}
\end{subfigure}
	\caption{$10^{1,1}_1$}
\label{fig:m}
\end{figure}

\begin{figure}[h!]
\begin{subfigure}{.3\textwidth}
  \centering
  \includegraphics[width=.7\linewidth]{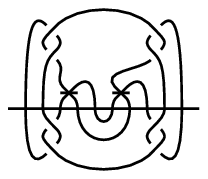}
  \label{fig:n1}
  \caption{ch-diagram\vspace{.4cm}}
\end{subfigure}
\begin{subfigure}{.7\textwidth}
  \centering
  \includegraphics[width=1\linewidth]{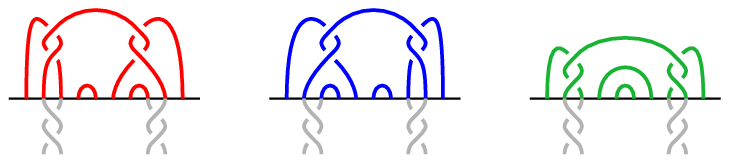}\vspace{-0.4cm}
  \label{fig:n2}
  \caption{Result of converting ch-diagram to tri-plane diagram\vspace{0.4cm}}
\end{subfigure}
\begin{subfigure}{.7\textwidth}
  \centering
  \includegraphics[width=1\linewidth]{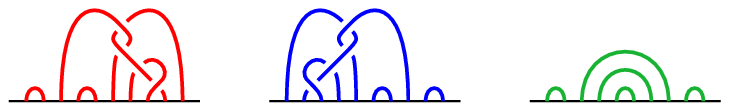}\vspace{-.4cm}
  \label{fig:n3}
  \caption{Crossing number lowered by braid and Reidemeister moves}
\end{subfigure}
	\caption{$10^{0,0,1}_1$}
\label{fig:n}
\end{figure}

\begin{figure}[h!]
\begin{subfigure}{.6\textwidth}
  \centering
  \includegraphics[width=.325\linewidth]{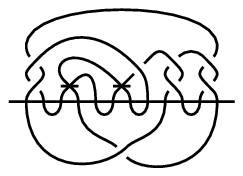} \\
  \label{fig:o1}
  \caption{ch-diagram\vspace{.4cm}}
\end{subfigure}
\begin{subfigure}{.65\textwidth}
  \centering
  \includegraphics[width=1\linewidth]{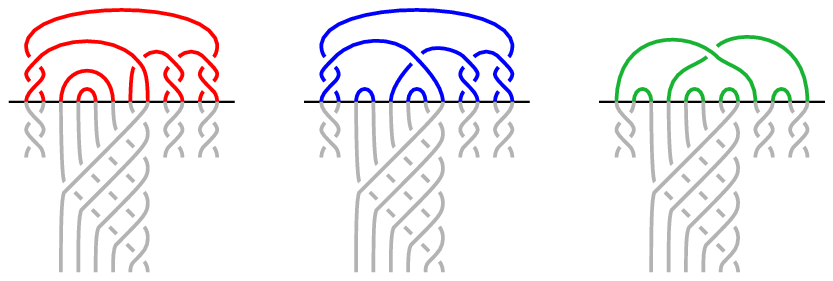}\vspace{-.4cm}
  \label{fig:o2}
  \caption{Result of converting ch-diagram to tri-plane diagram\vspace{.4cm}}
\end{subfigure}
\begin{subfigure}{.65\textwidth}
  \centering
  \includegraphics[width=1\linewidth]{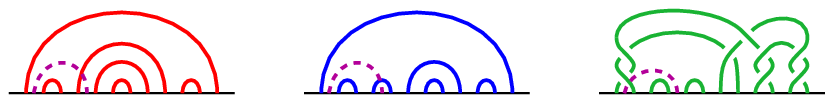}\vspace{-.4cm}
  \label{fig:o3}
  \caption{Crossing number lowered by braid and Reidemeister moves\vspace{.4cm}}
\end{subfigure}
\begin{subfigure}{.65\textwidth}
  \centering
  \includegraphics[width=1\linewidth]{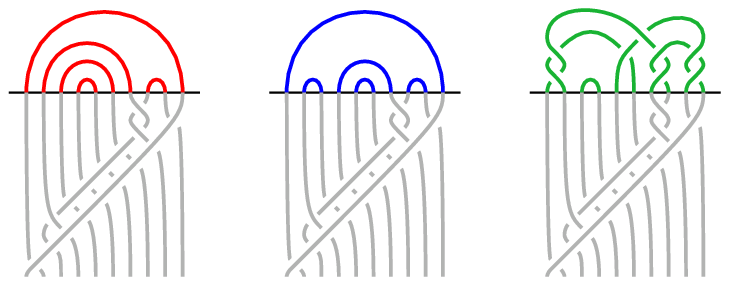}\vspace{-.4cm}
  \label{fig:o4}
  \caption{Result of destabilization\vspace{.4cm}}
\end{subfigure}
\begin{subfigure}{.65\textwidth}
  \centering
  \includegraphics[width=1\linewidth]{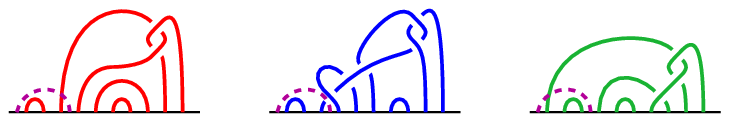}\vspace{-.4cm}
  \label{fig:o5}
  \caption{Braid and Reidemeister moves set up another destabilization\vspace{.4cm}}
\end{subfigure}
\begin{subfigure}{.65\textwidth}
  \centering
  \includegraphics[width=1\linewidth]{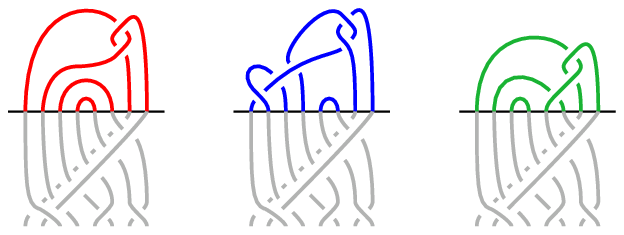}\vspace{-.4cm}
  \label{fig:o6}
  \caption{Result of destabilization}
\end{subfigure}
\begin{subfigure}{1\textwidth}
  \centering
  \includegraphics[width=.65\linewidth]{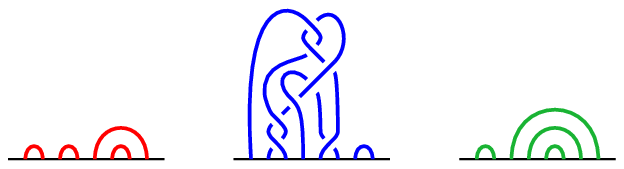}\vspace{-.2cm}
  \label{fig:o7}
  \caption{Result of braid and Reidemeister moves to decrease crossing number}
\end{subfigure}
	\caption{$10^{0,-2}_1$}
\label{fig:o}
\end{figure}

\begin{figure}[h!]
\begin{subfigure}{.3\textwidth}
  \centering
  \includegraphics[width=.7\linewidth]{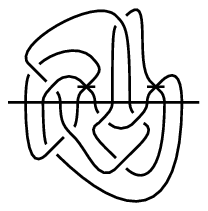}
  \label{fig:p1}
  \caption{ch-diagram\vspace{.4cm}}
\end{subfigure}
\begin{subfigure}{.7\textwidth}
  \centering
  \includegraphics[width=1\linewidth]{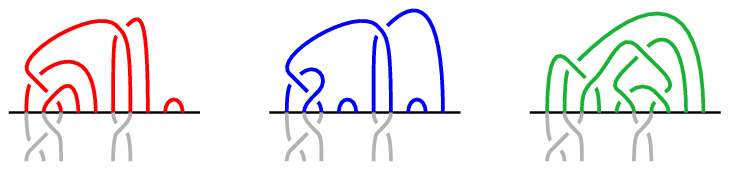}\vspace{-.4cm}
  \label{fig:p2}
  \caption{Result of converting ch-diagram to tri-plane diagram\vspace{.4cm}}
\end{subfigure}
\begin{subfigure}{.7\textwidth}
  \centering
  \includegraphics[width=1\linewidth]{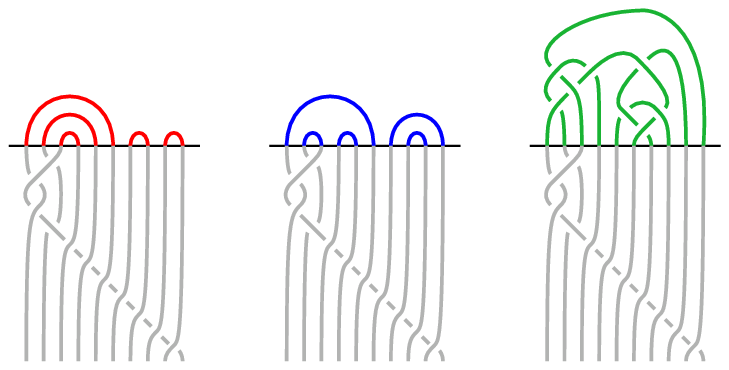}\vspace{-.4cm}
  \label{fig:p3}
  \caption{Minimum realized crossing number\vspace{.4cm}}
\end{subfigure}
\begin{subfigure}{.7\textwidth}
  \centering
  \includegraphics[width=1\linewidth]{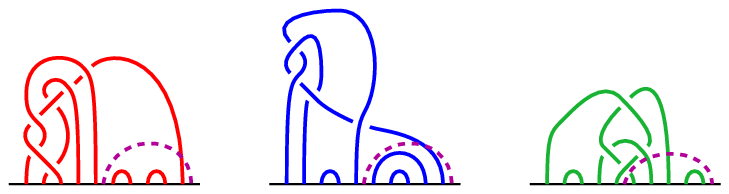}\vspace{-.4cm}
  \label{fig:p4}
  \caption{Braid and Reidemeister moves set up a destabilization\vspace{.4cm}}
\end{subfigure}
\begin{subfigure}{.7\textwidth}
  \centering
  \includegraphics[width=1\linewidth]{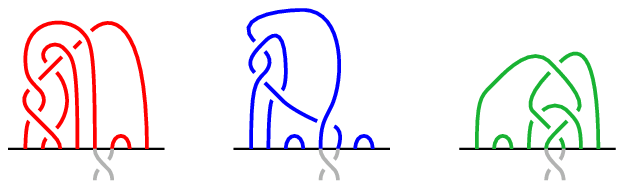}\vspace{-.4cm}
  \label{fig:p5}
  \caption{Result of destabilization\vspace{.4cm}}
\end{subfigure}
\begin{subfigure}{1\textwidth}
  \centering
  \includegraphics[width=.7\linewidth]{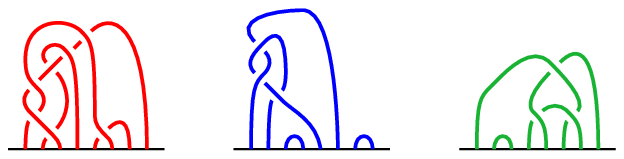}\vspace{-.1cm}
  \label{fig:p6}
  \caption{Result of braid and Reidemeister moves to decrease crossing number}
\end{subfigure}
	\caption{$10^{0,-2}_2$}
\label{fig:p}
\end{figure}

\begin{figure}[h!]
\begin{subfigure}{.3\textwidth}
  \centering
  \includegraphics[width=.7\linewidth]{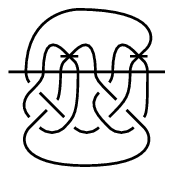}
  \label{fig:q1}
  \caption{ch-diagram\vspace{.4cm}}
\end{subfigure}
\begin{subfigure}{.7\textwidth}
  \centering
  \includegraphics[width=1\linewidth]{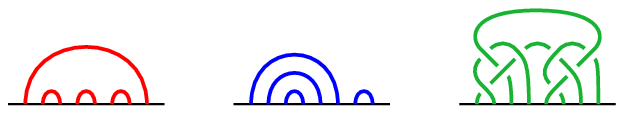}\vspace{-.4cm}
  \label{fig:q2}
  \caption{Result of converting ch-diagram to tri-plane diagram}
\end{subfigure}
	\caption{$10^{-1,-1}_1$}
\label{fig:q}
\end{figure}

\begin{figure}[h!]
\begin{subfigure}{.3\textwidth}
  \centering
  \includegraphics[width=.7\linewidth]{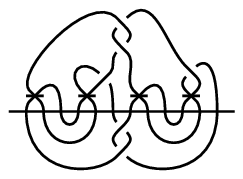}
  \label{fig:r1}
  \caption{ch-diagram\vspace{.4cm}}
\end{subfigure}
\begin{subfigure}{.7\textwidth}
  \centering
  \includegraphics[width=1\linewidth]{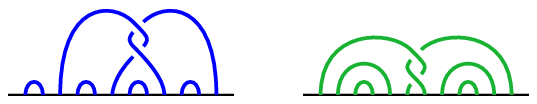}\vspace{-.4cm}
  \label{fig:r2}
  \caption{Result of converting ch-diagram to tri-plane diagram}
\end{subfigure}
	\caption{$10^{-2,-2}_1$}
\label{fig:r}
\end{figure}

\section{Questions}\label{sec:questions}

Following Remark~\ref{degen}, there are examples of surfaces for which both crossing number and bridge number degenerate under the connected sum operation, but we know no such examples among \emph{orientable} surfaces.

\begin{question}
For orientable surfaces $\K$ and $\K'$, does it hold that
\[ c(\K \# \K') = c(\K) + c(\K') \quad \text{ and } \quad b(\K \# \K') = b(\K) + b(\K') - 1?\]
\end{question}

For classical knots, the analogous equality for bridge number is known to be true~\cite{schubert,schultens}, while the question for crossing number is a notoriously difficult open problem in general.

In Table~\ref{table1}, we observe that we have minimized crossing number via concentrated diagrams for only about half of the surfaces $\K$.  We can define the \emph{concentrated crossing number} of $\K$, denoted $c^c(\K)$, to be the minimum crossing number among concentrated diagrams, leading naturally to the next question.

\begin{question}
Does there exist a surface $\K$ such that $c(\K) < c^c(\K)$?  If so, what is the largest possible difference $c^c(\K) - c(\K)$?
\end{question}

As noted above, the surface $10^{0,-2}_1$ provides an interesting example in that it has both an 8-crossing 5-bridge diagram and a 13-crossing 4-bridge diagram, where we were not able to reduce the crossing number below 13 among the family of 4-bridge diagrams for $10^{0,-2}_1$.

\begin{question}
Does there exist a surface $\K \subset S^4$ such that $b(\K)$ and $c(\K)$ cannot be realized by a single diagram?
\end{question}

Using the data in Table~\ref{table1}, it appears that crossing number is bounded below by bridge number, although this is not the case for unknotted surfaces, since $b(\Pp^{n,m}) = n+m+1$.

\begin{question}
Aside from constructions obtained by taking the connected sum with unknotted surfaces, does there exist a knotted surface $\K \subset S^4$ such that $b(\K) > c(\K)$?
\end{question}

Finally, we note that every surface $\K$ in our table satisfies $e(\K) = 0$.  By taking a connected sum of $\Pp^{\pm}$ with any $\K$ from the table, we can obtain a surface with $e(\Pp^{\pm} \# \K) = \pm 2$ and $c(P^{\pm} \# \K) \leq c(\K) + 1$.  But the following remains unknown for surfaces that are not connected sums.

\begin{question}
What is the smallest crossing number of a surface $\K$ such that $\K \neq \K' \# \Pp^{\pm}$ for some other surface $\K'$ and such that $e(\K) \neq 0$?
\end{question}

\bibliographystyle{amsalpha}
\bibliography{triplanebib}

\providecommand{\bysame}{\leavevmode\hbox to3em{\hrulefill}\thinspace}
\providecommand{\MR}{\relax\ifhmode\unskip\space\fi MR }
% \MRhref is called by the amsart/book/proc definition of \MR.
\providecommand{\MRhref}[2]{%
  \href{http://www.ams.org/mathscinet-getitem?mr=#1}{#2}
}
\providecommand{\href}[2]{#2}
\begin{thebibliography}{JMMZ22}

\bibitem[Art25]{Artin}
Emil Artin, \emph{Zur {I}sotopie zweidimensionaler {F}l\"{a}chen im {$R_4$}},
  Abh. Math. Sem. Univ. Hamburg \textbf{4} (1925), no.~1, 174--177.
  \MR{3069446}

\bibitem[GK16]{GK}
David Gay and Robion Kirby, \emph{Trisecting 4-manifolds}, Geom. Topol.
  \textbf{20} (2016), no.~6, 3097--3132. \MR{3590351}

\bibitem[HK79]{HK79}
Fujitsugu Hosokawa and Akio Kawauchi, \emph{Proposals for unknotted surfaces in
  four-spaces}, Osaka Math. J. \textbf{16} (1979), no.~1, 233--248. \MR{527028}

\bibitem[JMMZ22]{JMMZ}
Jason Joseph, Jeffrey Meier, Maggie Miller, and Alexander Zupan, \emph{Bridge
  trisections and classical knotted surface theory}, Pacific J. Math.
  \textbf{319} (2022), no.~2, 343--369. \MR{4482720}

\bibitem[KK08]{kearton-kurlin}
Cherry Kearton and Vitaliy Kurlin, \emph{All 2-dimensional links in 4-space
  live inside a universal 3-dimensional polyhedron}, Algebr. Geom. Topol.
  \textbf{8} (2008), no.~3, 1223--1247. \MR{2443242}

\bibitem[Mei23]{jeffPersonal}
J.~Meier, personal communication, Jan. 2023.

\bibitem[MTZ20]{MTZ}
Jeffrey {Meier}, Abigail {Thompson}, and Alexander {Zupan}, \emph{{Cubic graphs
  induced by bridge trisections}}, arXiv e-prints (2020), arXiv:2007.07280, to
  appear in Math. Res. Letters.

\bibitem[MZ17a]{MZB1}
Jeffrey Meier and Alexander Zupan, \emph{Bridge trisections of knotted surfaces
  in {$S^4$}}, Trans. Amer. Math. Soc. \textbf{369} (2017), no.~10, 7343--7386.
  \MR{3683111}

\bibitem[MZ17b]{MZ}
\bysame, \emph{Genus-two trisections are standard}, Geom. Topol. \textbf{21}
  (2017), no.~3, 1583--1630. \MR{3650079}

\bibitem[Sch54]{schubert}
Horst Schubert, \emph{\"{U}ber eine numerische {K}noteninvariante}, Math. Z.
  \textbf{61} (1954), 245--288. \MR{72483}

\bibitem[Sch03]{schultens}
Jennifer Schultens, \emph{Additivity of bridge numbers of knots}, Math. Proc.
  Cambridge Philos. Soc. \textbf{135} (2003), no.~3, 539--544. \MR{2018265}

\bibitem[Swe01]{swenton}
Frank~J. Swenton, \emph{On a calculus for 2-knots and surfaces in 4-space}, J.
  Knot Theory Ramifications \textbf{10} (2001), no.~8, 1133--1141. \MR{1871221}

\bibitem[Vir73]{viro}
O.~Ja. Viro, \emph{Local knotting of sub-manifolds}, Mat. Sb. (N.S.)
  \textbf{90(132)} (1973), 173--183, 325. \MR{0370606 (51 \#6833)}

\bibitem[Yos94]{yoshi}
Katsuyuki Yoshikawa, \emph{An enumeration of surfaces in four-space}, Osaka J.
  Math. \textbf{31} (1994), no.~3, 497--522. \MR{1309400}

\end{thebibliography}

\end{document}